\documentclass{amsart}
\usepackage[normalem]{ulem}
\usepackage{graphicx}
\usepackage{amssymb,amsmath,amsrefs}
\usepackage[colorlinks=true, linkcolor=red, citecolor=blue, urlcolor=blue]{hyperref}
\usepackage{xcolor}
\usepackage{graphicx}
\usepackage{enumerate}
\usepackage{tikz}
\usetikzlibrary{arrows.meta, positioning}

\usepackage{comment}

\usepackage{soul,color}
\usepackage{array}
\usepackage{wrapfig}
\usepackage{tabularx}
\usepackage{cancel}

\theoremstyle{plain}

\newtheorem{mainthm}{Theorem}

\newtheorem{mainclly}[mainthm]{Corollary}
\newtheorem*{conj*}{Conjecture}
\newtheorem*{cor*}{Corollary}
\newtheorem{theorem}{Theorem}[section]

\newtheorem{lemma}[theorem]{Lemma}

\newtheorem{claim}{Claim}

\theoremstyle{definition}
\newtheorem*{def*}{Definition}
\newtheorem{remark}[theorem]{Remark}

\newtheorem{definition}[theorem]{Definition}

\newcommand{\SC}{{\mathcal C}}

\newcommand{\SE}{{\mathcal E}}

\newcommand{\SM}{{\mathcal M}}

\newcommand{\SO}{{\mathcal O}}
\newcommand{\SP}{{\mathcal P}}

\newcommand{\SR}{{\mathcal R}}

\newcommand{\SU}{{\mathcal U}}

\renewcommand{\epsilon}{\varepsilon}

\newcommand{\Z}{\mathbb{Z}}

\newcommand{\R}{\mathbb{R}}
\newcommand{\eps}{\varepsilon}

\makeatletter
\newcommand{\tpitchfork}{
  \vbox{
    \baselineskip\z@skip
    \lineskip-.52ex
    \lineskiplimit\maxdimen
    \m@th
    \ialign{##\crcr\hidewidth\smash{$-$}\hidewidth\crcr$\pitchfork$\crcr}
  }
}
\makeatother

\numberwithin{equation}{section}

\title{On the Abundance of Phase Transitions in Dynamical Systems}

\author{Alexander Arbieto}
\address[A. Arbieto]{
	Universidade Federal do Rio de Janeiro, Instituto de Matem\'atica,
	Av. Athos da Silveira Ramos, 149,
	21941-909, Rio de Janeiro,
}
\email{arbieto@im.ufrj.br}

\author{Walter Britto}
\address[W. Britto]{
	Universidade Federal do Rio de Janeiro, Instituto de Matem\'atica,
	Av. Athos da Silveira Ramos, 149,
	21941-909, Rio de Janeiro,
}
\email{walter\_britto@ufrj.br}

\author{Elias Rego}
\address[E. Rego]{Faculty of Applied Mathematics, AGH University of Krakow, Krakow, Poland.}
\email{rego@agh.edu.pl}

\keywords{Phase Transitions, Chain-recurrence, Flows, Generic Dynamics}
\subjclass[2020]{Primary: 37D35, 37C05, 37C10 , 37B20.}

\begin{document}

\begin{abstract}
In this work, we investigate the mechanisms that trigger phase transitions in both discrete-time and continuous-time dynamical systems. We prove a simple topological condition that implies the existence of Hölder continuous potentials displaying phase transitions. As a consequence, we show that phase transitions are typical in several scenarios. Specifically, for $C^1$ diffeomorphisms and $C^1$ vector fields, we show that phase transitions are generic among non-transitive systems. In low dimensions, we conclude that they are generic for non-Anosov systems. Finally, in the $C^0$ setting, we prove that on any compact topological manifold of dimension different from 4, there exists a dense set of homeomorphisms with finite entropy that display phase transitions.
\end{abstract}

\maketitle

\section{Introduction}

The theory of dynamical systems studies the evolution of time-dependent models, with a central goal being the characterization of their long-term behavior under iteration. Typically, two complementary perspectives are considered: the topological, which focuses on concepts like orbit structure, recurrence, expansivity, and topological entropy; and the statistical or measure-theoretic, which is concerned with ideas such as invariant measures, ergodic averages, decay of correlations, and large deviation properties. Given the close connection of dynamical systems to natural phenomena, many time-evolution models originate from physical or probabilistic contexts, for instance maps modeling statistical mechanical systems, stochastic processes, or flows derived from differential equations. Consequently, numerous concepts in dynamical systems are intrinsically rooted in physical ideas. In particular, thermodynamic formalism provides a powerful toolkit by analogy, introducing notions such as topological pressure and equilibrium states to quantify the complexity or ``disorder'' of a system in terms of potentials or ``weights''. One phenomenon inherited from statistical physics is the phase transition. Phase transitions capture abrupt qualitative changes in the statistical description of a system as a control parameter is varied. Intuitively, they should be understood as the mathematical counterpart of familiar physical phenomena, such as melting or magnetization, but re-framed: instead of matter changing state, we observe non-analytic changes in thermodynamic quantities, most notably the topological pressure. 

To improve the readability of this introduction, we shall present some of the basic definitions needed to state our main results, while more technical definitions are postponed to Section~\ref{section_prelim}. Let $X$ be a compact metric space. In our setting, a dynamical system is given either by a homeomorphism $f:X \to X$ or by a continuous flow on $X$, i.e., a continuous map $\psi:\mathbb{R}\times X \to X$ satisfying $\psi(0,x)=x$ and $\psi(s+t,x)=\psi(t,\psi(s,x))$, for every $x\in X$ and $t,s\in \mathbb{R}$. By a \emph{continuous potential} on $X$, we mean a continuous map $\phi: X \to \mathbb{R}$. Let $\mathcal{C}(X,\mathbb{R})$ denote the set of continuous potentials on $X$. We write $\mathcal{M}_f(X)$ and $\mathcal{M}_{\psi}(X)$ for the set of $f$-invariant probability measures and the set of $\psi$-invariant measures, respectively. Similarly, $\mathcal{M}_f^e(X)$ and $\mathcal{M}_\psi^e(X)$ denote the respective sets of ergodic measures for $f$ and $\psi$. Using the celebrated variational principle, the \emph{topological pressure of $f$ with respect to $\phi$} is defined by
\[
P_{\mathrm{top}}(f,\phi) = \sup \left\{ h_\mu(f) + \int \phi \, d\mu \ ; \ \mu \in \mathcal{M}_f(X) \right\},
\]
where $h_\mu(f)$ denotes the metric entropy of $f$ with respect to $\mu$. Analogously, the \emph{topological pressure of $\psi$ with respect to $\phi$} is defined by
\[
P_{\mathrm{top}}(\psi,\phi) = \sup \left\{ h_\mu(\psi) + \int \phi \, d\mu \ ; \ \mu \in \mathcal{M}_\psi(X) \right\}.
\]

When $\phi\equiv 0$, the topological pressure reduces to the topological entropy, one of the most fundamental quantities in dynamics. The connection between thermodynamic formalism and smooth dynamics was established in the seminal works of Sinai, Bowen, and Ruelle for uniformly hyperbolic systems (or Anosov systems). For such systems, the pressure function exhibits ``nice'' regularity properties. The absence of these ``nice'' properties is precisely what mathematicians call a phase transition. Given a continuous potential $\phi$, it induces a map $\Phi:\mathbb{R} \to \mathbb{R}$ defined as $\Phi(t)=P_{\mathrm{top}}(f,t\phi)$. Following \cite{BC}, we define:

\begin{definition}
    We say that $f$ has a phase transition if there is $\phi\in \mathcal{C}(X,\mathbb{R})$ such that the map $\Phi$ is not analytic.
\end{definition}
The concept of phase transition for flows is defined in a completely analogous way. 
Significant efforts have been made to investigate systems displaying phase transitions; see, for instance, \cites{CR1, CR2, CR3, DGR, Lo, PS}. In the recent work \cite{BV24}, it was proven that phase transitions are generic among $C^1$-diffeomorphisms of surfaces that are not Anosov. Specifically, the authors showed that the set of diffeomorphisms exhibiting phase transitions forms a residual subset of $\text{Diff}^1(S)\setminus\mathbb{A}^1(S)$, where $S$ is a compact surface and $\mathbb{A}^1(S)$ denotes the set of Anosov diffeomorphisms. This result establishes a sharp dichotomy: either a surface diffeomorphism is Anosov (and thus lacks phase transitions for certain potentials), or phase transitions are predominant. 

Inspired by \cite{BV24}, this work aims to investigate the mechanisms yielding phase transitions in dynamical systems. We are interested in establishing simple and general criteria for phase transitions that are easy to verify and can be applied to a wide range of scenarios. Our approach is purely topological, aimed at detecting when a system exhibits phase transitions. Indeed, it is built upon the structure of the chain recurrent set, which is the locus that captures any form of recurrence in the system. Let us recall this concept. Our first main result provides a criterion for phase transitions in terms of chain-transitivity:      

\begin{mainthm}\label{thmA}
Let $f:X \to X$ be a homeomorphism and $\psi$ be a continuous flow on $X$. Assume $h_{\mathrm{top}}(f)<\infty$ and $h_{\mathrm{top}}(\psi)<\infty$.
\begin{enumerate}
    \item If $f$ is not chain-transitive, then $f$ admits a H\"older continuous potential exhibiting a phase transition.    
    \item If $\psi$ is not chain-transitive, then $\psi$ admits a H\"older continuous potential exhibiting a phase transition. 
\end{enumerate}
\end{mainthm}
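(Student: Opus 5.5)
Since $f$ is not chain-transitive, I will use Conley theory to split the chain recurrent set $CR(f)$ into two disjoint nonempty compact invariant pieces separated by an attractor--repeller pair. Concretely, there is an attractor $A$ with dual repeller $A^*$, both nonempty, with $CR(f)\subset A\cup A^*$. Any invariant measure is supported on $CR(f)$ (indeed on the nonwandering set). Every ergodic measure is therefore supported either in $A$ or in $A^*$. It follows that for any continuous $\phi$,
\[
P_{\mathrm{top}}(f,\phi)=\max\{P(f|_A,\phi|_A),\,P(f|_{A^*},\phi|_{A^*})\},
\]
where both restricted pressures are finite because $h_{\mathrm{top}}(f)<\infty$.

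The potential I will build is a Lipschitz (hence Hölder) function $\phi$ with $\phi\equiv 1$ on $A$ and $\phi\equiv 0$ on $A^*$, for example
\[
\phi(x)=\frac{d(x,A^*)}{d(x,A)+d(x,A^*)},
\]
or better the truncation $\phi=\min\{1,d(x,A^*)/\delta\}$ with $\delta=d(A,A^*)$. Then
\[
\Phi(t)=\max\{h_{\mathrm{top}}(f|_A)+t,\ h_{\mathrm{top}}(f|_{A^*})\}.
\]
This is a maximum of a line of slope $1$ and a constant. It has a corner at $t_0=h_{\mathrm{top}}(f|_{A^*})-h_{\mathrm{top}}(f|_A)$, which is a finite number by the entropy hypothesis. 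So $\Phi$ is not differentiable at $t_0$, hence not analytic. The same construction with the roles exchanged, or with any values $a\neq b$, also works.

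For the flow case the argument is parallel. I will use the Conley decomposition for flows (an attractor--repeller pair $A,A^*$ of the flow with $CR(\psi)\subset A\cup A^*$), the fact that $\psi$-invariant measures are supported on $CR(\psi)$, and the variational principle for flows restricted to the invariant compact sets. Again $\Phi(t)=\max\{h(\psi|_A)+t,h(\psi|_{A^*})\}$.

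The main obstacle is justifying the decomposition step cleanly. I will take from Conley theory (or an earlier preliminary in the paper) that non-chain-transitivity of $X$ gives a proper attractor with nonempty dual repeller. I will also need the two attractor and repeller sets to be nonempty and disjoint, so that a Lipschitz Urysohn function exists. A further subtlety is that "not chain-transitive" should be understood on all of $X$. If $CR(f)\ne X$, I still need two distinct chain classes, or an attractor--repeller pair whose union contains $CR(f)$. This holds whenever $X$ is not chain transitive: take points $x,y$ with $y$ not chain-reachable from $x$, and let $A=\omega$-limit of the chain-forward set of $x$ (an attractor containing its chain classes but not $y$'s). Finally, the supports of invariant measures lie in $\Omega(f)\subset CR(f)$, so the pressure splits as claimed.
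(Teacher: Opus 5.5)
Your proposal is correct and follows essentially the paper's route. You use the trapping region of points $\varepsilon$-chain-reachable from a fixed point to obtain an attractor--repeller pair $(A,A^*)$ with $CR(f)\subset A\cup A^*$. You then take a Lipschitz potential that separates $A$ from $A^*$, so that $\Phi$ is a maximum of two affine functions with different slopes. The only differences are cosmetic: you pick the two non-chain-connected points anywhere in $X$, which makes the paper's preliminary reduction to $CR(f)$ unnecessary, and your potential takes the values $1,0$ where the paper uses $1,-1$.
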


The previous result provides a simple topological criterion for checking the existence of phase transitions. By providing an easily verifiable condition, it allows one to prove the existence of phase transitions for a large class of systems. A natural question that arises is how large the set of systems displaying phase transitions actually is. As a major application of Theorem~\ref{thmA}, one can extend \cite{BV24}*{Theorem A} to a clean, higher-dimensional version. Whenever $X$ denotes a closed Riemannian manifold, we denote by $\text{Diff}^1(X)$ the set of $C^1$-diffeomorphisms of $X$ endowed with the $C^1$-topology.  

\begin{mainthm}\label{thm:generic diffeos}
Let $X$ be a closed Riemannian manifold. There is a $C^1$-generic set $\mathcal{R}\subset \text{Diff}^1(X)$ such that if $f\in \mathcal{R}$, then either $f$ is transitive or $f$ has a H\"older continuous potential exhibiting a phase transition.  
\end{mainthm}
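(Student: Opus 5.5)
The plan is to derive Theorem~\ref{thm:generic diffeos} from Theorem~\ref{thmA}(1) together with two classical $C^1$-generic facts. The first is that topological entropy is finite for every $C^1$ diffeomorphism of a closed manifold: $h_{\mathrm{top}}(f)\le \dim X\cdot\log^+\|Df\|_\infty<\infty$ by the Ruelle--Przytycki/Kushnirenko type bound. Hence the entropy hypothesis of Theorem~\ref{thmA} holds automatically for every $f\in\text{Diff}^1(X)$, so it suffices to find a residual set $\mathcal{R}$ on which chain-transitivity implies transitivity. For $f\in\mathcal{R}$ the dichotomy is then immediate. If $f$ is transitive we are done. If $f$ is not transitive, it is not chain-transitive, and Theorem~\ref{thmA}(1) gives a H\"older continuous potential with a phase transition.

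The second ingredient is Bonatti--Crovisier's $C^1$ connecting lemma for pseudo-orbits. It yields a residual set $\mathcal{R}_{BC}\subset\text{Diff}^1(X)$ on which every chain-recurrence class is the closure of (more precisely, coincides with) a homoclinic class whenever it contains a periodic point. It also yields $\Omega(f)=\mathcal{R}(f)$ (chain recurrent set), and, in the form most directly usable here, that if $f\in\mathcal{R}_{BC}$ and the whole manifold $X$ is a single chain-recurrence class (i.e.\ $f$ is chain-transitive), then $f$ is transitive. Indeed, Bonatti--Crovisier state explicitly as a corollary that for $C^1$-generic $f$, chain-transitivity of $X$ is equivalent to transitivity. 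I would take $\mathcal{R}=\mathcal{R}_{BC}$, possibly intersected with the Kupka--Smale residual set if the cited statement needs it.

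So the proof will consist of: (i) recalling finiteness of entropy for $C^1$ diffeomorphisms (or for Lipschitz homeomorphisms of manifolds); (ii) quoting the Bonatti--Crovisier generic equivalence between chain-transitivity and transitivity; (iii) applying Theorem~\ref{thmA}(1) in the non-transitive case. The only substantial step is (ii). If the exact corollary is not available in the form needed, the argument is as follows. For generic $f$, the chain recurrence classes coincide with the closures of the classes for the relation ``$y$ is accessible from $x$'' (the $C^1$ connecting lemma for pseudo-orbits). Chain-transitivity of $X$ then gives that for any open sets $U,V$ some iterate $f^n(U)$ meets $V$, which is transitivity. This is where the Baire-category bookkeeping lives: one countable family of open conditions indexed by pairs from a countable basis. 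I expect this to be the main obstacle, though it is resolved by citation rather than new work.
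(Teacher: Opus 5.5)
Your proposal is correct and follows the paper's proof: finite entropy of $C^1$ diffeomorphisms, a $C^1$-generic equivalence between chain-transitivity and transitivity, and then Theorem~\ref{thmA}(1) in the non-transitive case. The only difference is the citation for the generic step. The paper uses Crovisier's result that isolated chain-recurrence classes are generically homoclinic classes, applied to the single class $CR(f)=X$, whereas you invoke the Bonatti--Crovisier connecting lemma for pseudo-orbits and its corollaries directly; either works.
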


At first glance, this result may not immediately resemble the original statement of \cite[Theorem A]{BV24}. Nevertheless, the dichotomy provided in the statement can be re-casted into a equivalent form for surface diffeomorphims, as illustrated by the following Corollary.  

\begin{mainclly}\label{cor: generic diffeos}
Let $X$ be a closed Riemannian surface. There is a $C^1$-generic set $\mathcal{R}\subset \text{Diff}^1(X)$ such that if $f\in \mathcal{R}$, then either $f$ is an Anosov Diffeomorphism or it admits a H\"older continuous potential exhibiting a phase transition.    
\end{mainclly}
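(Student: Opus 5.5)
The plan is to deduce Corollary~\ref{cor: generic diffeos} from Theorem~\ref{thm:generic diffeos} together with known $C^1$-generic results for surface diffeomorphisms. Let $\mathcal{R}_1$ be the residual set provided by Theorem~\ref{thm:generic diffeos}. We intersect it with a second residual set $\mathcal{R}_2$ on which transitivity forces hyperbolicity, and take $\mathcal{R}=\mathcal{R}_1\cap\mathcal{R}_2$, which is still residual by Baire. For $f\in\mathcal{R}$ there are two cases. If $f$ is not transitive, Theorem~\ref{thm:generic diffeos} already gives a H\"older potential with a phase transition. If $f$ is transitive, we want to conclude that $f$ is Anosov.

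For the transitive case we use the following standard $C^1$-generic facts.
\begin{enumerate}
\item By Bonatti--Crovisier, for $C^1$-generic $f$, transitivity implies that $X$ is a single homoclinic class $H(p)$ of a hyperbolic periodic point $p$.
\item By the Mañé / Pujals--Sambarino theory, refined by Abdenur--Bonatti--Crovisier and Bonatti--Díaz--Pujals, a $C^1$-generic homoclinic class that is isolated (here it is all of $X$) either admits a dominated splitting or is accumulated by sinks or sources.
\item A transitive $f$ whose nonwandering set is all of $X$ cannot have sinks or sources, unless $X$ is a single periodic orbit, which is impossible for a surface. Robust transitivity arguments show more: genericity plus transitivity forces $f$ to be $C^1$-robustly without sinks and sources nearby, that is, $f$ lies in the interior of the set of diffeomorphisms far from sinks and sources.
\item Hence, by Bonatti--Díaz--Pujals, $X$ carries a dominated splitting $E\oplus F$. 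On a surface both bundles are one-dimensional, so the splitting is $TX=E\oplus F$ with $\dim E=\dim F=1$.
\item Pujals--Sambarino's theorem for $C^2$ maps is replaced in the $C^1$-generic setting by the argument of Abdenur--Bonatti--Crovisier (or Mañé's proof of the stability conjecture on surfaces). It upgrades a dominated splitting on a $C^1$-generic, robustly transitive surface diffeomorphism to a hyperbolic one, because a non-hyperbolic one-dimensional direction could be perturbed to create a sink or source.
\end{enumerate}
Thus $f$ is Anosov.

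An alternative and cleaner route is to cite \cite{BV24} or Mañé directly: $C^1$-generically on surfaces, a diffeomorphism is either Axiom A with no cycles, or has infinitely many sinks or sources (Newhouse phenomenon). In the second case $f$ is not transitive, since a sink has an open basin, which contradicts density of a single orbit unless everything is that sink. In the first case, transitivity makes $X$ a single hyperbolic basic set, so $f$ is Anosov. I would adopt this route. Let $\mathcal{R}_2$ be the residual set from the generic Axiom A/Newhouse dichotomy on surfaces, often attributed to Mañé combined with Abdenur--Bonatti--Crovisier.

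The main obstacle is quoting the correct generic dichotomy. Precisely, we need that $C^1$-generic surface diffeomorphisms are either Axiom A or have infinitely many sinks or sources. We also need the observation that a transitive diffeomorphism whose entire manifold is a hyperbolic basic set is Anosov. The latter is immediate, since hyperbolicity of $\Omega(f)=X$ is exactly the Anosov condition. Conversely, if $f$ is Anosov, nothing further is needed, since the corollary is stated as a dichotomy, not an exclusive one.
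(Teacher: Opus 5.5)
Your adopted route is correct and is essentially the paper's proof: intersect Mañé's $C^1$-generic set on surfaces (Axiom A, or infinitely many sinks or sources) with the residual set of Theorem~\ref{thm:generic diffeos}. A transitive $f$ in this intersection has no sinks or sources, so it is Axiom A with $\Omega(f)=X$, i.e.\ Anosov, while a non-transitive $f$ has a phase transition. Your first sketched route through dominated splittings is unnecessary, and its item 3, that generic transitive maps are robustly far from sinks and sources, is not justified as stated, so you were right to discard it.
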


Another setting where Theorem~\ref{thmA} applies is the generic dynamics of smooth flows. A natural question is whether a flow counterpart to Theorem~\ref{thm:generic diffeos} holds true. Nevertheless, one must exercise caution when transposing problems from diffeomorphisms to flows. Indeed, there are phenomena intrinsic to continuous-time systems that are completely absent in the discrete-time case. A prime example is how traditional hyperbolicity is incompatible with the coexistence of singularities and regular orbits exhibiting non-trivial recurrence. There are several examples of robust chaotic dynamics that lie outside the hyperbolic framework. One of the most celebrated cases is the paradigmatic Lorenz attractor, which paved the way for exploring weaker forms of hyperbolicity for flows. Consequently, beyond the scope of Anosov flows, a vast class of systems remains resilient to classical hyperbolic analysis. Therefore, extending any result involving hyperbolicity to a generic flow setting requires careful consideration of the obstacles posed by the lack of hyperbolicity and the presence of singularities. One key advantage of our topological approach is that it bypasses these geometric nuances, thus allowing us to extend Theorem~\ref{thm:generic diffeos} to flows. To precisely state our results, let $X$ be a compact and boundaryless manifold, and let $\mathfrak{X}^1(X)$ denote the space of $C^1$ vector fields on $X$ endowed with the $C^1$-topology. 

\begin{mainthm}\label{thm:generic flows}
Let $X$ be a closed Riemannian manifold. There exists a residual subset $\mathcal{R} \subset \mathfrak{X}^1(X)$ such that if $V \in \mathcal{R}$, then the flow induced by $V$ is either transitive or admits a H\"older continuous potential exhibiting a phase transition.
Furthermore, if $X$ is three-dimensional, there exists a residual subset $\mathcal{R} \subset \mathfrak{X}^1(X)$ such that if $V \in \mathcal{R}$, then the flow induced by $V$ is either a transitive Anosov flow or it admits a H\"older continuous potential exhibiting a phase transition.
\end{mainthm}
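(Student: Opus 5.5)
The plan is to reduce Theorem~\ref{thm:generic flows} to Theorem~\ref{thmA}(2) using standard $C^1$-generic facts about vector fields. Three inputs are needed. First, $C^1$ flows on compact manifolds have finite topological entropy, since the time-one map is Lipschitz; so the hypothesis $h_{\mathrm{top}}(\psi)<\infty$ holds automatically. Second, by the connecting lemma for pseudo-orbits for flows (Bonatti--Crovisier, extended to vector fields with singularities), there is a residual set $\mathcal{R}_1\subset\mathfrak{X}^1(X)$ on which every chain-recurrence class is the closure of a periodic orbit's homoclinic class, or a singularity. In particular, for $V\in\mathcal{R}_1$, chain-transitivity of the flow on all of $X$ is equivalent to transitivity. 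Third, Kupka--Smale genericity, which we intersect with $\mathcal{R}_1$.

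\textbf{Step 1 (general dimension).} Let $\mathcal{R}=\mathcal{R}_1\cap\mathcal{R}_{KS}$, and let $V\in\mathcal{R}$ with flow $\psi$. If $\psi$ is chain-transitive, then $X$ is a single chain-recurrence class. By the generic identification above, $\psi$ is transitive. If $\psi$ is not chain-transitive, then Theorem~\ref{thmA}(2) applies, since the entropy is finite, and it yields a H\"older potential with a phase transition. This proves the first assertion.

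\textbf{Step 2 (dimension three).} Intersect with further residual sets. Consider $V$ in the residual set that is transitive. If $V$ has a singularity, I would use the generic $3$-dimensional theory (Morales--Pacifico--Pujals, Gan--Yang, Crovisier--Yang): a $C^1$-generic transitive $3$-flow with singularities is not robustly transitive. The hard part is turning this into a phase-transition statement for $V$ itself. So instead I would first aim for a dichotomy: either $V$ lies in the interior of transitive fields, or $V$ is approximated by non-transitive fields. In the second case, a standard semicontinuity/Baire argument shows that generically the flow is not chain-transitive. Concretely, consider the open-dense condition ``$V$ is robustly transitive, or $V$ is in the interior of fields having a proper attracting/repelling filtration''. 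The second alternative is open, because trapping regions persist. On a residual set, then, either $V$ is robustly transitive or $V$ is robustly non-chain-transitive, and the latter falls under Theorem~\ref{thmA}(2).

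\textbf{Step 3 (robustly transitive fields on $3$-manifolds).} These are handled by known classification results. Doering's theorem, together with its extension to singular flows, says a robustly transitive $3$-flow without singularities is Anosov. With singularities, the flow is singular hyperbolic (Lorenz-like), by Morales--Pacifico--Pujals. In the Lorenz-like case we need a phase transition again, though the flow is transitive. Here I would use the fact that a singularity $\sigma$ is accumulated by regular orbits. Take $\phi$ H\"older with $\phi(\sigma)=\max\phi$ strictly larger than elsewhere. For large $t$ the Dirac measure $\delta_\sigma$ is the unique equilibrium state, with pressure $t\phi(\sigma)$ exactly, since singularity measures have zero entropy. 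For $t=0$ the pressure is $h_{\mathrm{top}}>0$. The function $\Phi$ is therefore affine for large $t$ but not globally affine, and convexity then forces non-analyticity. Note that $\phi$ should be chosen so that no other measure attains $\int\phi=\phi(\sigma)$, and then pressure equals $t\phi(\sigma)$ for $t\ge t_0$ by an entropy bound.

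\textbf{Main obstacle.} The obstacle is Step 2/3: making the genericity bookkeeping rigorous and confirming that the singular robustly transitive case yields a phase transition rather than an Anosov flow. I would first try the ``dominating singular Dirac measure'' argument above. That argument is essentially the same mechanism as in Theorem~\ref{thmA}, and it is what rules out all non-Anosov alternatives.
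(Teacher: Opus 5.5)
Your Step~1 is essentially the paper's argument: generic equivalence of chain-transitivity and transitivity, then Theorem~\ref{thmA}(2). One correction: generically it is not true that every chain class is a homoclinic class or a singularity. What holds, and suffices, is that $CR(\psi)=\overline{\operatorname{Per}(\psi)\cup\operatorname{Sing}(\psi)}$ and that a class containing a periodic orbit is a homoclinic class.

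The three-dimensional part, however, has a genuine gap. The Baire argument in Step~2 only shows that a generic chain-transitive field lies in the interior of the (closed) set of chain-transitive fields. It does not place it in the interior of the transitive fields. Density of robustly transitive fields in that interior is exactly what would need proof, and the Doering and Morales--Pacifico--Pujals classifications apply only to robustly transitive fields.

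Step~3 misreads Morales--Pacifico--Pujals. A robustly transitive singular set of a $3$-flow is a \emph{proper} attractor or repeller. So a field that is robustly transitive on all of $X$ has no singularities, and the Lorenz-like case you try to treat does not occur. This also contradicts your own remark at the start of Step~2.

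Moreover, the ``dominating Dirac measure'' argument would not work as stated. You have $\Phi(t)-t\phi(\sigma)=\sup_\mu\bigl[h_\mu-t\bigl(\phi(\sigma)-\int\phi\,d\mu\bigr)\bigr]$, and this vanishes for large $t$ only under a linear bound $h_\mu\le C\bigl(\phi(\sigma)-\int\phi\,d\mu\bigr)$. A unique maximizer does not give such a bound. At best, upper semicontinuity of entropy yields $\Phi(t)-t\phi(\sigma)\to 0$, which is compatible with analyticity. Indeed, on a mixing subshift of finite type, $t\mapsto P(t\phi)$ is real-analytic for every H\"older $\phi$, even when $\phi$ has a unique maximum at a fixed point. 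Nothing in your sketch supplies the required estimate near a Lorenz-like singularity.

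The idea you are missing is that, generically in dimension three, no transitive singular case has to be handled at all. The paper intersects with the Morales--Pacifico residual set (Theorem~\ref{thm_residual}). Either $\psi$ has infinitely many sinks or sources, hence is not chain-transitive, or $\psi$ is singular Axiom~A without cycles. In the latter case there are two sub-cases:
\begin{itemize}
\item If every piece of $\Omega(\psi)$ is hyperbolic and $\psi$ is not a transitive Anosov flow, there are at least two pieces, and the no-cycle condition produces an attractor and a repeller.
\item If some piece $\Lambda_i$ is singular hyperbolic with a singularity $\sigma$, then $W^{ss}(\sigma)\cap\Lambda_i=\{\sigma\}$ forces $\Lambda_i\neq X$.
\end{itemize}
In every non-Anosov case the flow fails to be chain-transitive, and Theorem~\ref{thmA}(2) with Lemma~\ref{lemma: non-transitive} concludes. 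No thermodynamic analysis at singularities is needed.
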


Since our methods are purely topological, it is natural to ask whether similar results hold in the continuous regularity setting. Nevertheless, the flexibility enjoyed by the $C^0$-topology makes the dynamics of homeomorphisms significantly wilder. A key difference is that homeomorphisms can exhibit infinite topological entropy, a phenomenon that does not occur for $C^1$-diffeomorphisms. Consequently, the pressure functions of continuous potentials for such homeomorphisms are always infinite. 
The situation is even worse: a striking result by Yano~\cite{Y80} proves that the set of homeomorphisms on compact manifolds with infinite entropy is $C^0$-generic, rendering the search for results analogous to Theorems~\ref{thm:generic diffeos} and~\ref{thm:generic flows} meaningless in the unrestricted $C^0$-setting. Nevertheless, one can still investigate the abundance of phase transitions by restricting the discussion to systems with finite entropy.

Another issue that arises in lower regularity is that the topology of topological manifolds can be far more intricate. This is illustrated by the existence of exotic manifolds, which complicates their classification and makes robust approximation results scarcer without extra assumptions on the ambient manifold. In our final main result, we overcome these difficulties and establish the abundance of phase transitions for finite-entropy homeomorphisms on compact on topological $n$-manifolds for $n\neq 4$. 

\begin{mainthm}\label{thm: dense homeo}
Let $X$ be a compact and boundaryless $n$-dimensional topological manifold. If $n\neq 4$, there is a $C^0$-dense set $\mathcal{U}\subset \text{Homeo}(X)$ such that every $f\in \mathcal{U}$ has finite entropy and admits a H\"older continuous potential with a phase transition.         
\end{mainthm}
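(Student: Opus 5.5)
The plan is to reduce Theorem~\ref{thm: dense homeo} to Theorem~\ref{thmA}(1). It suffices to show that the set of homeomorphisms which have finite topological entropy and are \emph{not} chain-transitive is $C^0$-dense in $\text{Homeo}(X)$. Given $f\in\text{Homeo}(X)$ and $\varepsilon>0$, I will build $g$ with $d_{C^0}(f,g)<\varepsilon$ in two stages: first make the map regular enough that its entropy is finite, then break chain-transitivity by a small local modification that does not increase entropy.

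\textbf{Step 1 (finite entropy).} I will use that for $n\neq 4$ the compact manifold $X$ admits a triangulation, or at least a handle/PL-type structure in the relevant approximation sense. I will also use that homeomorphisms of such manifolds can be $C^0$-approximated by homeomorphisms that are Lipschitz, or PL with respect to some structure. This is the Kirby--Siebenmann theory in dimensions $\ge 5$, and Moise/Bing (every 2- and 3-manifold is smoothable and homeomorphisms are approximable by diffeomorphisms) in dimensions $\le 3$. Choosing a metric compatible with that structure, a Lipschitz homeomorphism $f_1$ with constant $L$ satisfies $h_{\mathrm{top}}(f_1)\le n\log L<\infty$, which is the standard Kushnirenko/Bowen bound. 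The theorem only asks for a Hölder potential, and Hölder continuity depends on the metric, so I will work in this bi-Lipschitz-compatible metric, or note that the two metrics are Hölder-related.

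\textbf{Step 2 (breaking chain transitivity).} I take a small embedded closed ball $B$ around some point $p$, chosen in a coordinate chart where $f_1$ is Lipschitz. I modify $f_1$ in a neighborhood of $f_1^{-1}(B)$ to obtain $g$ with a fixed point $q$ that is a topological sink; for example, $g$ acts on a tiny ball $D$ with $g(D)\subset \operatorname{int} D$ and contracts radially near $q$. Concretely, I compose $f_1$ with a Lipschitz homeomorphism $h$ supported in a ball of diameter $<\varepsilon$. The map $h$ sends $f_1(q)$ to $q$ and contracts a smaller ball into itself. This uses the standard fact that a Lipschitz (Alexander-trick-type, radial) homeomorphism of a ball moving one interior point to another exists with displacement less than the ball's diameter. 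Then $g=h\circ f_1$ is Lipschitz, so $h_{\mathrm{top}}(g)<\infty$, and $d_{C^0}(g,f_1)<\varepsilon$. Since $g(D)\subset\operatorname{int}D$ and $D\neq X$, $D$ is a trapping region. A trapping region whose complement is nonempty prevents chain transitivity, because no $\delta$-chain can leave $\bigcap_k g^k(D)$ and return to a point outside $D$. The point outside $D$ is recurrent in the chain sense only if there is chain recurrence at all, so to be safe I argue directly: the chain-recurrent set, if it is all of $X$, would force chains from $q$ to points of $X\setminus D$, which is impossible for $\delta$ small. Hence $g$ is not chain-transitive, and Theorem~\ref{thmA}(1) gives a Hölder potential with a phase transition.

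\textbf{Main obstacle.} The delicate step is Step 1: $C^0$-approximating an arbitrary homeomorphism by one with finite entropy on a general topological manifold. This is exactly where $n\neq4$ enters. In dimension $4$ neither triangulability nor the needed approximation theorem is available. For $n\ge5$ I would cite the Lipschitz structure and approximation results of Sullivan (every topological manifold of dimension $\neq4$ has a Lipschitz structure), combined with a result that homeomorphisms are approximable by Lipschitz homeomorphisms in that structure, e.g. via the stable homeomorphism/handle-straightening machinery. If that approximation proves hard to cite cleanly, the fallback is to avoid approximating $f$ globally. Instead, I would prove finite entropy is dense by making the map have \emph{zero-dimensional-like} simple dynamics: perturb $f$ to one whose nonwandering set consists of finitely many pieces, each with controlled entropy.
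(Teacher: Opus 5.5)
Your Step~1 matches the paper, which cites Tukia--V\"ais\"al\"a for the $C^0$-density of Lipschitz homeomorphisms when $n\neq 4$. One correction there: for $n\ge 5$ a triangulation or PL structure need not exist, and what is always available is Sullivan's Lipschitz structure, so drop that part.

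\textbf{The gap is in Step~2, which you treat as routine.} A $C^0$-small perturbation cannot in general create a \emph{fixed} sink. If $g(q)=q$ and $d_{C^0}(g,f_1)<\varepsilon$, then $d(f_1(q),q)<\varepsilon$. Your construction already presupposes this, since $h$ is supported in a ball of diameter $<\varepsilon$ and sends $f_1(q)$ to $q$. Many homeomorphisms have displacement bounded below, for example $x\mapsto x+\tfrac{1}{2}$ on $\mathbb{R}/\mathbb{Z}$, translation by $(\tfrac{1}{2},0,\dots,0)$ on $\mathbb{T}^n$, or the antipodal map of $S^n$. If $d(f(x),x)\ge c>0$ for all $x$, then every homeomorphism within $C^0$-distance $c$ of $f$ is fixed-point free. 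So your argument produces nothing near such $f$, and density is not established.

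\textbf{The missing idea is to use a periodic orbit}, which is what the paper's closing lemma (Theorem~\ref{lemma:closing}) supplies. Take a recurrent point $x$ of $f_1$ and a small ball $V=B(x,s)$ in a bi-Lipschitz chart, with $s\notin\{d(f_1^i(x),x)\}_{i\ge 1}$. Let $k$ be the first return time of $x$ to $V$, so that $f_1^i(x)\notin\overline{V}$ for $0<i<k$. Then choose a bi-Lipschitz $h$ supported in $V$ with $h(f_1^k(x))=x$, whose Lipschitz constant on a neighbourhood of $f_1^k(x)$ is less than $\mathrm{Lip}(f_1)^{-k}$. Contracting a ball ``into itself'' is not enough; the contraction must beat the expansion of $f_1^k$, and this applies already in your fixed-point case with $k=1$.

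With this choice, $g=h\circ f_1$ is Lipschitz, $d_{C^0}(g,f_1)\le\operatorname{diam}V$, and $g^k(\overline{D})\subset\operatorname{int}D$ for a small ball $D\ni x$. You then need non-chain-transitivity of $g$ itself, not just of $g^k$. Use the trapping region $U=\bigcup_{i=0}^{k-1}g^i(B_i)$, built from nested balls $B_0\Subset B_1\Subset\cdots\Subset B_{k-1}=D$ around $x$ with $g^k(\overline{D})\subset B_0$. This satisfies $g(\overline{U})\subset U\neq X$.

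Once repaired, your route is leaner than the paper's. A single Lipschitz composition keeps $g$ globally Lipschitz, so finite entropy is automatic. The paper instead performs two further perturbations around the periodic orbit and must control entropy at each stage: it blows the orbit up into a ball of periodic points (Theorem~\ref{lemma:explosion}) and then inserts a sink (Lemma~\ref{lema:sinkcreation} with Remark~\ref{rmk:fixedperiodic}).
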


The remainder of this text is organized as follows: in Section~\ref{section_prelim}, we recall classical basic concepts and results that will be used to prove our main theorems. Section~\ref{sec: chaintransitive} is devoted to the proof of Theorem~\ref{thmA}. In Section~\ref{sec: everywhere}, we apply our results to prove Theorems \ref{thm:generic diffeos}, \ref{thm:generic flows} and \ref{thm: dense homeo}.

\section{Preliminaries}\label{section_prelim}

In this section we are going to collect the basic concepts and previously known results needed to obtain our main result.
\subsection{Elementary concepts in discrete-time dynamical systems}

Let $X$ be a compact metric space and $f:X\to X$ be a homeomorphism. For any $x\in X$, the {\it orbit} of $x$, the positive orbit of $x$ and the negative orbit of o $x$ under $f$ are, respectively, the  sets $$\mathcal{O}(x) = \{f^n(x) : n \in \mathbb{Z}\}, \mathcal{O}^+(x) = \{f^n(x) : n \geq 0\} \textrm{ and } \mathcal{O}^-(x) = \{f^n(x) : n \leq0\}.$$  A point $x\in X$ is periodic if there is $n>0$ such that $f^n(x) = x$. The minimal such  $n$ is called the period of $x$ and  is denoted by $\pi(x)$.  If $\pi(x)=1$, then $x$ is said to be a fixed point. Let $Fix(f)$ and $Per(f)$ denote the set of fixed and periodic points of $f$, respectively. The critical set of $f$ is defined as $Crit(f)=Fix(f)\cup Per(f)$. We say that $x \in X$ is a \textrm{non-wandering point} if for every neighborhood $U$ of $x$ there is $n>0$ such that $f^n(U) \cap U \neq \emptyset$. The set of non-wandering points of $f$ is denoted by $\Omega(f)$. 

For $x \in X$, the \textit{$\omega$-limit} of $x$ is defined as the set $\omega(x)$ of accumulation points of the   $\SO^+(x)$. Equivalently,  $y\in \omega(x)$ if there is a sequence of positive integers $n_k\to \infty$, such that $f^{n_k}(x)\to y$. We  define the $\alpha$-limit set of $x$, as the set $\alpha_f(x)  = \omega_{f^{-1}}(x)$. If $x\in \omega(x)$, we say that $x$ is a recurrent point. Denote by $\SR(f)$ the set of recurrent points of $f$. Observe that the sets $\omega(x),\alpha(x),\SR(f)$ and $\Omega(f)$ are compact, invariant and it holds $Per(f) \subset \SR(f)\subset \Omega(f)$ and $\omega(x)\cup\alpha(x)\subset\Omega(f)$, for every $x\in X$.
Recall that given a metric space $X$, we say that a subset $\SR\subset X$ is a residual subset if it contains a countable intersection of open and dense subsets of $X$. We say that $f$ is topologically transitive if for any pair of non-empty open sets there is $n>0$ such that $f^{-1}(U)\cap V$. If $X$ has no isolated points, it is equivalent to the existence of a residual subset $\SR\subset X$ such that for every $x\in X$, one has $\omega(x)=X$.

In \cite{C}, Conley introduced a more general type of recurrence called chain-recurrence and which will be central to this work. Let us now recall this concept.
\begin{definition}
Let $f$ be a homeomorphism on a metric space $X$. Given $\epsilon > 0$ and $x, y \in X$, an $\epsilon$-chain from $x$ to $y$ with respect to $f$ is a finite sequence $x = x_0, x_1, \ldots, x_{n-1}, x_n = y$ in $X$ such that
\[
d(f(x_i), x_{i+1}) < \epsilon
\]
for $i = 0, 1, 2, \ldots, n-1$.
\end{definition}

Two points $x,y \in X$ are said to be chain-equivalent if for each $\epsilon > 0$ there exists an $\epsilon$-chain from $x$ to $y$ and there exists an $\epsilon$-chain from $y$ to $x$. If $x \in X$ is chain equivalent to itself, then it is called \textit{ chain-recurrent point}. The set of all chain recurrent points is called \text{ chain-recurrent set}, and is denoted by $CR(f)$. Notice that $$Per(f)\subset \SR(f)\subset \Omega(f)\subset CR(f),$$
and it is well known that, in general, all the inclusions are strict. 
The relation $\sim$ defined by
\[
x \sim y \quad \text{if and only if} \quad x \text{ is chain equivalent to } y
\]
is an equivalence relation on $CR(f)$. The equivalence classes of the  relation $\sim$ are called the chain-recurrent classes of $f$. Given $x\in CR(f)$, we denote by $C(x)$ the chain-recurrent class of $x$.
 One can use $\epsilon$-chains to extend topological transitivity to a more general type of dynamical irreducibility called chain transitivity. Namely, a set $\Lambda \subset X$ is \textit{chain} transitive if $\Lambda$ is nonempty,  closed and invariant set such that for each $x, y \in \Lambda$, and each $\epsilon > 0$  there exists an $\epsilon$-chain from $x$ to $y$.

\subsection{Elementary concepts in continuous-time dynamical systems}
Next we recall the notions of recurrence for flows. Let $\psi$ denote a continuous flow in $X$. For every $x\in X$, we denote the orbit, the positive orbit and the negative orbit of $x$, respectively, as the sets:
$$\mathcal{O}(x) = \{\psi_t(x) : t \in \mathbb{R}\}, \mathcal{O}^+(x) = \{\psi_t(x) : t \geq 0\} \textrm{ and } \mathcal{O}^-(x) = \{\psi_t(x) : t \leq0\}.$$
\begin{remark}
    Note that we use the same notation for the orbits of both flows and homeomorphisms. This will not cause confusion, however, as these two cases are treated separately throughout this work. Similarly, we employ the same notation for the sets $\omega(x)$ and $\alpha(x)$ in both settings.   
\end{remark}
We say that a point $x \in X$ is a \emph{fixed point} (or \emph{singularity}) for $\psi$ if $\psi_t(x) = x$ for every $t \in \mathbb{R}$; otherwise, $x$ is called a \emph{regular point}. A regular point is \emph{periodic} if there exists $t > 0$ such that $\psi_t(x) = x$. The \emph{period} of a periodic point $x$ is defined as $\pi(x) = \inf\{t > 0 \colon \psi_t(x) = x\}$. Let $Sing(\psi)$ and $Per(\psi)$ denote the set of singularities and periodic points of $\psi$, respectively. The critical set of $\psi$ is then defined as $Crit(\psi)=Sing(\psi)\cup Per(\psi)$.

The definitions of the sets $\operatorname{Per}(\psi)$,$\SR(\psi)$, $\omega(x)$, and $\alpha(x)$, as well as the concept of topological transitivity for $\psi$, are verbatim identical to their counterparts for homeomorphisms, simply replacing the discrete time $n \in \mathbb{Z}$ with the continuous time $t \in \mathbb{R}$. A point $x\in X$ is a non-wandering point if for every neighborhood $U$ of $x$ and every $T>0$, there is $t>T$ such that $\psi_t(x)\in U$. Let us denote the set of non-wandering points of $\psi$ by $\Omega(\psi)$.

Next, we recall the concept of chain recurrence for flows. 
\begin{definition}\label{def1}
 Given $\epsilon, T > 0$ and $x, y \in X$, an $\epsilon$-$T$-chain from $x$ to $y$ with respect to $\psi$ is a finite sequence $(x_i,t_i)_{i=0}^{n-1}$  such that $t_i\geq T$ and 
\[
d(\psi_{t_i}(x_i), x_{i+1}) < \epsilon
\]
for $i = 0, 1, 2, \ldots, n-1$.
\end{definition}

Two points $x,y \in X$ are said to be chain-equivalent if for every $\epsilon,T > 0$ there exist an $\epsilon$-$T$-chain from $x$ to $y$ and  an $\epsilon$-$T$-chain from $y$ to $x$. If $x \in X$ is chain equivalent to itself, then it is called \textit{ chain-recurrent point}. The set of all chain recurrent points is called \text{ chain-recurrent set}, and is denoted by $CR(\psi)$. Again, we have $$Per(\psi)\subset \SR(\psi)\subset \Omega(\psi)\subset CR(\psi),$$
and it is well known that, in general, all the inclusions are strict. 
The relation $\sim$ defined by
\[
x \sim y \quad \text{if and only if} \quad x \text{ is chain equivalent to } y
\]
is an equivalence relation on $CR(f)$. Analogously to the discrete-time case, we define an equivalence relation in $CR(\psi)$ and the equivalence class of any point $x$ is denote by $C(x)$. We say that a set $\Lambda \subset X$ is \textit{chain} transitive if for each $x, y \in \Lambda$, and each $\epsilon,T > 0$  there exist an $\epsilon$-$T$-chain from $x$ to $y$.

\subsection{Topological Entropy}

Next, we recall the concept of  topological entropy.  Fix a subset  $N\subset X $ and $\varepsilon, n>0$. A subset set $K\subset N$ is called a  \textit{$n$-$\varepsilon$-separated} subset of $N$ if for any pair of distinct points $x,y\subset K$ there is some $0\leq n_0\leq n$ such that $$d(f^{n_0}(x),f^{n_0}(y))>\varepsilon.$$ 
Let $S(n,\varepsilon, N)$ denotes the maximal cardinality of a $n$-$\varepsilon$-separated subset of $N$.  
It is easy to see that  $S(n,\varepsilon,N)$  is finite due to the compactness of $X$.
 Denote $$h(f,N,\eps)=\limsup\limits_{n\to\infty}\frac{1}{n}\log(S(n,\varepsilon, N)).$$

    We define the \textit{topological entropy} of $f$ on $N$ as the number  $$h_{top}(f,N)=\lim\limits_{\varepsilon\to 0}h(f,N,\eps).$$
    The topological entropy of $f$ is defined as the number $$h_{top}(f)=h_{top}(f,X). $$
it is  well-known that $h_{top}(f^n)=nh_{top}(f)$, for every $n>0$. Next let us recall the concept of entropy-expansiveness. We say that $f$ is \textit{entropy-expansive} if there is $e>0$  such that $h_{top}(f,B^{\infty}_e(x,f))=0$, for every $x\in X$, where
$$B^{\infty}_e(x,f)=\{y\in X; d(f^n(x),f^n(y))\leq e, \textrm{ for every } n\in \Z \}.$$

If $\psi$ is a continuous flow in $X$, we define $$h_{top}(\psi,N)=h_{top}(\psi_1,N)\textrm{ and }h_{top}(\psi)=h_{top}(\psi_1).$$
We then define $$B^{\infty}_e(x,\psi)=\{y\in X; d(\psi_t(x),\psi_t(y))\leq e, \textrm{ for every } t\in \R \},$$
and say that $\psi$ is \textit{entropy-expansive} if there is $e>0$  such that $h_{top}(f,B^{\infty}_e(x,\psi))=0$, for every $x\in X$.

\subsection{Ergodic Theory and Metric Entropy}

Next, we shall recall some well-known concepts from ergodic theory. A borealian probability $\mu$ on $X$ is {\it $f$-invariant}  if $\mu(A)=\mu(f^{-1}(A))$, for every measureable set $A$. We say that an $f$-invariant measure $\mu$ is {\it ergodic} if one has $\mu(A)\cdot\mu(A^c)=0$ for every $f$-invariant set. We denote by $\SM(X)$, $\SM_f(X)$ and $\SM_f^e(X)$ the sets of probability measures of $M$, invariant measures of $f$  and ergodic measures of $f$, respectively. The space of $\SM(X)$ can be made a compact metrizable space by endowing it with the weak*-topology. In this case,  we have $$\mu_n\to \mu \textrm{ if, and only if, } \int\phi d\mu_n\to \int\phi d\mu, \forall\phi\in \SC(X,\R).$$ 
Notice that, by definition,  the map $\mu\to \int \phi d\mu$ is continuous in $\SM(X)$. The celebrated Krylov–Bogolyubov's Theorem states that $\SM_f(M)$ is non-empty, whenever $f$ is a continuous map. In addition, by the Ergodic Decomposition Theorem the set $\SM_f^e(X)$ is also non-empty. 

For each $x\in M$, define the $n$-th empirical measure of $x$ as 
$$\SE_n(x)=\frac{1}{x}\sum_{j=0}^{n-1}\delta_{f^j(x)},$$
where $\delta_y$ denotes the Dirac's measure at $y$. We say that $x$ is a {\it generic point of $\mu\in \SM_f(X)$}, if $\SE_n(x)\to \mu.$

 Let $\SP$ be a finite measurable partition of $X$. For any $n>0$, we denote $$\mathcal{P}_n=\mathcal{P}\vee f^{-1}(\mathcal{P})\vee\cdots\vee f^{n-1}(\mathcal{P}).$$
The metric entropy of $\SP$ is defined as: $$ h_{\mu}(f,\mathcal{P})=-\lim_{n\to\infty}\frac{1}{n}\sum_{P\in\mathcal{P}_n}\mu(P)\log\mu(P).$$
Finally,  the \textit{metric entropy} of $f$ with respect to $\mu$ is given by 
\begin{displaymath}
    h_{\mu}(f)=\sup\lbrace h_{\mu}(f,\mathcal{P}): \mathcal{P}\text{ is a finite partition}\rbrace, 
\end{displaymath}
The metric and topological entropies of $f$ are then linked by the celebrated variational principle:
For any continuous map $f:M\to M$, it holds
$$h_{top}(f)=\sup\{h_{\mu}(f); \mu\in \SM_f(X)\}=\sup\{h_{\mu}(f); \mu\in \SM_f^e(X)\}.$$

\section{Chain-Transitivity and Phase Transitions}\label{sec: chaintransitive}
This section is devoted to the proof of Theorem~\ref{thmA}. Since its statement consists of two parts (one concerning discrete-time systems and the other dealing with continuous-time systems) and the arguments in both scenarios follow exactly the same lines, we shall provide a detailed proof only for the case of homeomorphisms. For the flow setting, we will outline how to adapt the steps.  

\subsection{The discrete-time case}
Let $f:X\to X$ be a homeomorphism.  We begin by recalling the concept of  attracting sets and repelling sets. An open set $U\subset X$ is a \textit{ trapping region } for a $f$ if $f(\overline{U})\subset U$.
 A set $\Lambda$ is an \textit{ attracting set} for $f$ if there exists a trapping region $U$ for $f$ such that
\[
\Lambda = \bigcap_{n \geq 0} f^n(U).
\]
A set $\Lambda^*$ is a \textit{ repelling set} for $f$ if there exists a trapping region $U$ for $f$ such that
\[
\Lambda^* = \bigcap_{n \leq 0} f^n(X \setminus U).
\]

When we wish to emphasize the dependence of an attracting set $\Lambda$ or a repelling set $\Lambda^*$ on the trapping region $U$ from which it arises, we denote it by $\Lambda_U$ or $\Lambda_U^*$, respectively. A pair $(\Lambda, \Lambda^*)$ of subsets of $X$ is an \textit{ attracting-repelling pair} for $f$ if there exists a trapping region $U$  such that
$$
\Lambda = \bigcap_{n \geq 0} f^n(U) \quad \text{and} \quad \Lambda^* = \bigcap_{n \leq 0}f^n(X \setminus U)
$$

It is well known that if $U$ is a trapping region for a homeomorphism $f$ associated with the attracting-repelling pair $(\Lambda, \Lambda^*)$, then $X \setminus \overline{U}$ is a trapping region for $f^{-1}$. Moreover, $(\Lambda^*, \Lambda)$ is the attracting-repelling pair for $f^{-1}$ associated with the trapping region $X \setminus \overline{U}$. Attracting and repelling sets capture the asymptotic behavior of every orbit of a homeomorphism. Precisely, if $(\Lambda, \Lambda^*)$ is an attracting-repelling pair and $x \in M \setminus \Lambda^*$, then $\omega(x) \subset \Lambda$. Conversely, if $x \in M \setminus \Lambda$, then $\alpha(x) \subset \Lambda^*$.

Conley \cite{C} showed that given a trapping region $U$, its intersection with the chain recurrent set $CR(f)$ necessarily lies within the associated attracting set. In other words, if $U$ is a trapping region for $f$, then
\[
CR(f) \cap U \subseteq \Lambda.
\]
Since the non-wandering set satisfies $\Omega(f) \subset CR(f)$, it follows that $\Omega(f) \cap U \subseteq \Lambda$. Furthermore, since $CR(f) = CR(f^{-1})$, one also has that
\[
\Omega(f) \cap (CR(f) \setminus U) \subseteq \Lambda^*.
\]

We are now ready to present our main result. The key idea of the proof is to show that, under our hypotheses, one can guarantee the existence of a non-trivial trapping region $U$. From this region, we construct a suitable Hölder continuous potential based on the attracting-repelling pair $(\Lambda, \Lambda^*)$ associated with $U$, which ultimately allows us to prove that the system exhibits a phase transition.  The next lemma reduces the condition of not being chain-transitive to an analogous condition induced in $CR(f)$. This reduction will be applied many times throughout this work.

\begin{lemma}\label{lemma: non-transitive}
Let $f:X\to X$ be a homeomorphism and $\phi$ a continuous flow on $X$. Then $f$ is not chain-transitive if and only if $CR(f)$ is not chain-transitive. Similarly,  $\phi$ is not chain-transitive if and only if $CR(\phi)$ is not chain-transitive.    
\end{lemma}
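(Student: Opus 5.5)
The plan is to prove the equivalent positive statement: $f$ is chain-transitive (that is, $X$ is a chain-transitive set) if and only if $CR(f)$ is chain-transitive. I read the definition of a chain-transitive set as stated in Section~\ref{section_prelim}: the $\epsilon$-chains joining points of $\Lambda$ are sequences in $X$. The proof is the same for flows, with $\epsilon$-$T$-chains in place of $\epsilon$-chains.

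\emph{($\Rightarrow$)} Suppose $X$ is chain-transitive. Then for every $x$ and every $\epsilon>0$ there is an $\epsilon$-chain from $x$ to itself, so every point is chain-recurrent and $CR(f)=X$. Hence $CR(f)$ is chain-transitive (it is nonempty, closed and invariant).

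\emph{($\Leftarrow$)} Suppose $CR(f)$ is chain-transitive, and fix $x,y\in X$ and $\epsilon>0$. I build an $\epsilon$-chain from $x$ to $y$ in three pieces.
\begin{enumerate}
\item By compactness, $\omega(x)\neq\emptyset$, and $\omega(x)\subset\Omega(f)\subset CR(f)$. Pick $z\in\omega(x)$ and $n$ with $d(f^{n}(x),z)<\epsilon$. The chain $x,f(x),\dots,f^{n-1}(x),z$ is an $\epsilon$-chain from $x$ into $CR(f)$.
\item Similarly $\alpha(y)\neq\emptyset$ and $\alpha(y)\subset CR(f)$. By continuity of $f$, choose $\delta>0$ with $\delta\le\epsilon$ such that $d(a,b)<\delta$ implies $d(f(a),f(b))<\epsilon/2$. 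Pick $w\in\alpha(y)$ and $m>0$ with $d(f^{-m}(y),w)<\epsilon/2$. Since $\alpha(y)$ is invariant, $f^{-1}(w)\in CR(f)$.
\item Chain-transitivity of $CR(f)$ gives a $\delta$-chain $z=z_0,\dots,z_k=f^{-1}(w)$. I modify its last step: replace the final point $f^{-1}(w)$ by $f^{-m}(y)$. Then
\[
d(f(z_{k-1}),f^{-m}(y))\le d(f(z_{k-1}),z_k)+d(z_k,f^{-m}(y))<\delta+\tfrac{\epsilon}{2}\le\tfrac{3\epsilon}{2}.
\]
\end{enumerate}
Concatenating this with the true orbit $f^{-m}(y),\dots,f^{-1}(y),y$ gives a $2\epsilon$-chain from $x$ to $y$. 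Since $\epsilon$ is arbitrary, $X$ is chain-transitive. The alternative of ending the chain exactly at $w$ and then jumping into the orbit of $y$ also works; either way, only continuity of $f$ is needed to absorb the final jump.

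\emph{Flows.} The argument is identical. For the first piece, use times $t_k\to\infty$ with $\psi_{t_k}(x)\to z\in\omega(x)$ and choose $t_k\ge T$, so the first jump is a legitimate step of an $\epsilon$-$T$-chain. For the last piece, take $s\ge T$ with $\psi_{-s}(y)$ close to $w\in\alpha(y)$, and use the $\epsilon$-$T$-chain inside $CR(\psi)$ in the middle. The final jump is handled as before, using uniform continuity of $\psi_t$ for $t$ in a bounded interval. The inclusions $\omega(x),\alpha(x)\subset\Omega(\psi)\subset CR(\psi)$ recalled earlier supply the needed facts.

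The only delicate point is the endpoint bookkeeping, namely landing exactly on $y$ while keeping all jumps below $\epsilon$ and, for flows, all times at least $T$. This is dealt with by choosing the intermediate tolerance $\delta$ via continuity and taking the orbit segments long enough; no real obstacle is expected.
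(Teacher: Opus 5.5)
Your route is the paper's. You leave $x$ along its forward orbit until you are $\epsilon$-close to $\omega(x)\subset CR(f)$, cross $CR(f)$ using its chain-transitivity, and arrive at $y$ along its backward orbit from near $\alpha(y)\subset CR(f)$. The paper runs the same construction only for a loop from a point $y\notin CR(f)$ back to itself and argues by contradiction. You join arbitrary $x,y$ directly, and that difference is cosmetic.

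However, step 3 contains an index slip that leaves the displayed estimate unjustified. Your middle chain ends at $z_k=f^{-1}(w)$, and you then use $d(z_k,f^{-m}(y))<\epsilon/2$. But your choice of $m$ only controls $d(w,f^{-m}(y))$, and nothing relates $f^{-1}(w)$ to $f^{-m}(y)$. The continuity modulus $\delta$ is never actually used; only $\delta\le\epsilon$ enters the computation.

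The repair needs no continuity at all; either of the following works:
\begin{enumerate}
\item Let the chain end at $w$ and replace $w$ by $f^{-m}(y)$, which is exactly the paper's bookkeeping. Then $d(f(z_{k-1}),f^{-m}(y))\le d(f(z_{k-1}),w)+d(w,f^{-m}(y))<\tfrac{3\epsilon}{2}$.
\item Keep $z_k=f^{-1}(w)$ and append $f^{-m}(y)$ as a new point. This works because $d(f(z_k),f^{-m}(y))=d(w,f^{-m}(y))<\epsilon/2$.
\end{enumerate}

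The flow case is repaired the same way. Replace the endpoint $w$ of the $\epsilon$-$T$-chain by $\psi_{-s}(y)$, then flow for time $s\ge T$ to land exactly on $y$; the appeal to uniform continuity of $\psi_t$ is unnecessary. With this correction your argument is complete.
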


\begin{proof}
The proofs for homeomorphisms and flows are completely analogous. For this reason, we will only provide the proof for the discrete-time case. 
Naturally, we only need to prove the sufficiency. Suppose that $f$ is not chain-transitive, but $CR(f)$ is chain-transitive. Hence, the set $U=X\setminus CR(f)$ is non-empty and open. Fix $y\in U$ and $\varepsilon>0$. Observe that $\alpha(y)\cup \omega(y)\subset CR(f)$ and, hence, there exist $n_1,n_2>0$ such that 
$$d(f^{n_1}(y),CR(f))<\varepsilon \quad \text{and} \quad d(f^{-n_2}(y),CR(f))<\varepsilon.$$

Let $x_1,x_2\in CR(f)$ be two points realizing the previous distances, respectively. Since $CR(f)$ is chain-transitive, we obtain an $\varepsilon$-chain $(z_i)_{i=0}^k$ connecting $x_1$ to $x_2$. Consequently, the sequence of points 
$$y, f(y), \dots, f^{n_1-1}(y), z_0, \dots, z_{k-1}, f^{-n_2}(y), \dots, f^{-1}(y), y$$
forms a $2\varepsilon$-chain from $y$ to $y$. Since $\varepsilon$ was chosen arbitrarily, we conclude that $y\in CR(f)$, contradicting the choice of $y\in U$. Consequently, $CR(f)$ is not chain-transitive.   
\end{proof}

\begin{proof}[Proof of item (1) in Theorem \ref{thmA}]
Suppose $f$ is not chain transitive. Then, Lemma \ref{lemma: non-transitive} implies $CR(f)$ is not chain-transitive. Consequently, there  are points $y,z \in CR(f)$ and $\varepsilon> 0$ such that do not exist an $\varepsilon$-chain from $y$ to $z$. Now consider the set 
\[
U = \left\{ x \in X \,\middle|\,  \text{ there exists an } \varepsilon\textrm{-chain from } y \text{ to } x \right\}.
\]

\begin{claim}
    $U$ is a non-empty trapping region for $f$ which is not equal to $X$.
\end{claim}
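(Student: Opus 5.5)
We check the four properties of $U$ in turn: nonempty, open, $f(\overline{U})\subset U$, and $U\neq X$.

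\textbf{Nonempty and proper.} By convention an $\varepsilon$-chain has length $n\geq 1$. Since $d(f(y),f(y))=0<\varepsilon$, the pair $y,f(y)$ is an $\varepsilon$-chain, so $f(y)\in U$. In fact $y\in U$ as well: $y\in CR(f)$ gives an $\varepsilon$-chain from $y$ to itself, although we do not need this. The choice of $y,z$ says there is no $\varepsilon$-chain from $y$ to $z$, so $z\notin U$ and $U\neq X$.

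\textbf{Openness.} Take $x\in U$ with an $\varepsilon$-chain $y=x_0,\dots,x_n=x$. The final jump satisfies $d(f(x_{n-1}),x)<\varepsilon$, and this inequality is strict. Set $\delta=\varepsilon-d(f(x_{n-1}),x)>0$. For any $x'$ with $d(x,x')<\delta$, the triangle inequality gives $d(f(x_{n-1}),x')<\varepsilon$. Replacing the last point by $x'$ therefore yields an $\varepsilon$-chain from $y$ to $x'$. Hence the ball $B(x,\delta)$ lies in $U$, and $U$ is open.

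\textbf{Trapping: $f(\overline{U})\subset U$.} This is the step needing the most care, because we must control the closure rather than $U$ itself. Let $w\in\overline{U}$. Using uniform continuity of $f$ (as $X$ is compact), choose $\eta>0$ with $d(a,b)<\eta\Rightarrow d(f(a),f(b))<\varepsilon$. Pick $x\in U$ with $d(x,w)<\eta$, and an $\varepsilon$-chain $y=x_0,\dots,x_n=x$. Append $f(w)$ to this chain. The new jump is from $x_n=x$ to $f(w)$, and it satisfies $d(f(x),f(w))<\varepsilon$ by the choice of $\eta$. So $y=x_0,\dots,x_n,f(w)$ is an $\varepsilon$-chain, and $f(w)\in U$.

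No further input is needed; the only point that could fail is the closure step, and uniform continuity handles it. One could also note that $\overline{U}\subset\{x: \text{there is an } \varepsilon'\text{-chain from } y \text{ to } x\}$ for every $\varepsilon'>\varepsilon$, but the direct argument above suffices.
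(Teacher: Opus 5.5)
Your proof is correct and follows essentially the same route as the paper. Properness comes from $z\notin U$, and openness from perturbing the last point of a chain; the paper simply observes that $x\in B_\varepsilon(f(x_{n-1}))\subset U$. The inclusion $f(\overline{U})\subset U$ is obtained exactly as in the paper, by using uniform continuity to append $f(w)$ to a chain ending at a nearby point of $U$. Your remark that $y\in U$ relies on $y\in CR(f)$ when chains have length at least one is a small point the paper leaves implicit.
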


\begin{proof}
Firstly, observe that $y\in U$ and, by definition, $z\not\in U$. So $U\neq X$. The invariance of $U$ is straightforward. Indeed, if $x\in U$, there there is an $\eps$-chain $$x_0=y,x_1,\cdots,x_n=x.$$ By considering $$x_0=y,x_1\cdots , x_n=x,x_{x+1}=f(x),$$ we see that $f(x)\in U$. Next, we show that $U$ is open. For this sake, fix $x\in U$ and let $$x_0=y,x_1,\cdots,x_{n-1},x_n=x$$ be an $\eps$-chain. It is clear that $x\in B_{\eps}(f(x_{n-1}))$ and $B_{\eps}(f(x_{n-1}))\subset U$.

Finally, consider $x\in \overline{U}$. By continuity, we can fix $\delta>0$ such that if $d(v,w)\leq \delta$, then $d(f(v),f(w))<\eps$. Chose any point $w\in U\cap B_{\frac{\delta}{2}}(x)$. Since $w\in U$, there is an $\eps$-chain $$x_0=y,x_1,\cdots,x_{n-1},x_n=w.$$ By the choice of $\delta$, we get $d(f(x),f(w))\leq \eps$. In particular, the sequence $$x_0=y,x_1,\cdots,x_{n-1},w,f(x)$$ is an $\eps$-chain from $y$ to $f(x)$. This shows $f(\overline{U})\subset U$ and the claim is proved. 
\end{proof}

Now, since $\Omega(f) \subseteq CR(f)$, we have that $\Omega(f) \cap U \subseteq \Lambda_U.$ We can then fix  $$\Omega_2(f) = \Omega(f) \backslash \Omega_1(f) = \Omega(f) \cap (CR(f) \backslash U).$$ Hence,  $$\Omega_2(f) = \Omega(f) \backslash \Omega_1(f) = \Omega(f) \cap (CR(f) \backslash U) \subset \Lambda^*_U.$$

Next, we construct a Hölder continuous potential $\phi$ on $X$. Let $\delta_1 = d(\Lambda, \partial U) > 0$. Since the repeller $\Lambda^* \subset X \setminus \overline{U}$ is compact and $f^{-1}(X \setminus U) \subset \operatorname{int}(X \setminus U)$, we can find a neighborhood $V$ of $\Lambda^*$ such that $\overline{V} \cap \overline{U} = \emptyset$ and $\delta_2 = d(\Lambda^*, \partial V) > 0$. Moreover, we can choose $V$ small enough so that the $\delta_1$-neighborhood of $\Lambda$ and the $\delta_2$-neighborhood of $\Lambda^*$ are disjoint. Setting $\delta = \min\{\delta_1, \delta_2\} > 0$, we define
$$
\phi_1(x) = \max \bigg\{0,1 - \frac{d(x,\Lambda_U)}{\delta}\bigg\}\ \quad \text{and} \quad \phi_2(x) = \max \bigg\{0,1 - \frac{d(x,\Lambda^*_U)}{\delta}\bigg\}
$$
and  $\phi(x) = \phi_1(x) - \phi_2(x)$.

By construction, $\phi_1$ is supported in the $\delta$-neighborhood of $\Lambda$, while $\phi_2$ is supported in the $\delta$-neighborhood of $\Lambda^*$, and these supports are disjoint. Thus, $\phi(x) = 1$ on $\Lambda$, $\phi(x) = -1$ on $\Lambda^*$, and $|\phi(x)| < 1$ elsewhere. The function $\phi$ is Hölder continuous since it is a combination of Lipschitz functions. Recall that any invariant measure $\mu$ satisfies $\mu(\Omega(f)) = 1$. Since every point outside the attractor-repeller pair $(\Lambda, \Lambda^*)$ is wandering, it follows that no invariant measure can be supported on $CR(f) \setminus (\Lambda \cup \Lambda^*)$; that is, $\mu(CR(f) \setminus (\Lambda \cup \Lambda^*)) = 0$. Therefore, points outside $\Lambda \cup \Lambda^*$ do not contribute to the pressure function.

Now, let $\mu \in \mathcal{M}_f^e(X)$ be an ergodic measure. If $x \in U$ is a generic point for $\mu$, then its forward orbit converges to $\Lambda$, and hence $\operatorname{supp}(\mu) \subset \Lambda$. Analogously, if $x \in X \setminus U$ is a generic point for $\mu$, then the backward orbit of $x$ converges to $\Lambda^*$, implying that $\operatorname{supp}(\mu) \subset \Lambda^*$. Consequently, the metric pressure function $P_\mu(f, t\phi) := h_\mu(f) + t \int \phi \, d\mu$ takes the form
$$
     P_{\mu}(f,t\phi) = h_{\mu}(f) + t \int \phi \, d\mu = \begin{cases} 
     h_\mu(f) + t, & \text{supp}(\mu) \subset \Lambda_U, \\
     h_\mu(f) - t, & \text{supp}(\mu) \subset \Lambda_U^*
     \end{cases}
$$

By assumption, $h_{top}(f)<\infty$. So, the variational principle implies the existence of  $0\leq a,b< \infty$ satisfying
$$
a := \sup_{\mu \in \SM_f^e(X)} \{h_\mu(f) \mid \text{supp}(\mu) \subset \Lambda_U\} \quad \text{and} \quad b := \sup_{\mu \in \SM_f^e(X)} \{h_\mu(f) \mid \text{supp}(\mu) \subset \Lambda_U^*\}. 
$$
Then one has that
\begin{align*}
P_{\text{top}}(f,t\phi) &= \sup_{\mu \in \SM_f^e(X)} \left\{ h_\mu(f) + t \int \phi  d\mu \right\} \\
&= \max \left\{ \sup_{\text{supp$(\mu) \subset \Lambda_U$}} h_\mu(f) + t, \sup_{\text{supp$(\mu) \subset \Lambda_U^*$}} h_\mu(f) - t \right\} \\
&= \max \left\{ a + t, b - t \right\}
\end{align*}
which is not differentiable at $t_0 = \frac{b - a}{2}$ (or in $t_0 = 0$ in the case of $a=b$). Therefore, a phase transition occurs.
\end{proof}

\subsection{The flow case}
In this subsection, we explain how to adapt the proof of item~(1) in Theorem~\ref{thmA} to the context of flows. The core strategy in the homeomorphism case was to leverage the fact that $CR(f)$ is not chain-transitive to construct an attracting-repelling pair for $f$, which was subsequently used to build a Hölder continuous potential exhibiting a phase transition. The construction of this pair was established in Claim~1.

For the case of flows, the theories of chain recurrence and attracting-repelling pairs translate seamlessly from the discrete-time setting (see, for instance, \cite{AL} for a comprehensive exposition of recurrence in topological flows). This dictionary preserves all the essential properties used in item~(1). Assuming $CR(\psi)$ is not chain-transitive, there exist $y,z \in CR(\psi)$ and $\varepsilon, T > 0$ such that no $\varepsilon$-$T$-chain connects $y$ to $z$. In this continuous-time setting, the trapping region introduced in Claim~1 takes the form
\[
U = \{x \in X \colon \text{there exists an } \varepsilon\text{-}T\text{-chain from } y \text{ to } x\}.
\]
The proof of Claim~1 in this scenario is a direct adaptation of the discrete-time argument. Once the attracting-repelling pair is obtained, the construction of the potential function likewise follows mutatis mutandis, yielding item~(2) and thereby completing the proof of Theorem~\ref{thmA}.

\section{Phase Transitions Everywhere}\label{sec: everywhere}
In this section we shall see that phase transitions are abundant in the sense that they are present in large sets of dynamical systems. We are particuarly interesed in proving Theorems \ref{thm:generic diffeos}, \ref{thm: dense homeo} and Corollary \ref{cor: generic diffeos}. 

\subsection{\texorpdfstring{$C^1$}{C1}-generic diffeomorphisms}
We begin by turning our attention to attention to Theorem \ref{thm:generic diffeos}. Before starting with its proof, we first need to setup some notation related to the setting of diffeomorphisms. Let us consider $f:X\to X$ a $C^1$ diffeomorphism of a closed manifold $X$. 
We say that a compact and invariant set $\Lambda\subset X$ is a \textit{ hyperbolic set} if there are $C>0$, $0<\lambda<1$ and a $Df$-invariant splitting $T\Lambda=E^{s}\oplus E^u$ satisfying:
\begin{itemize}
    \item $\Vert Df^n|_{E^s}  \Vert\leq C\lambda^ n$, for every $x\in \Lambda$ and $n\geq 0$.
    \item  $\vert Df^{-n}|_{E^u_x}\Vert\leq C\lambda^{n}$, for every $x\in \Lambda$ and $n\leq 0$.
    \end{itemize}
We say that a periodic point is \textit{hyperbolic} if $\SO(p)$ is a hyperbolic set. By the invariant manifold theory (see \cite{HPS}), for every point $x$ in a hyperbolic set $\Lambda$, the bundles $E_x^s$ and $E_x^u$ are, respectively,  uniquely integrable to the $C^1$-manifolds:
\[W^{s}(x)=\{y\in X; d(f^n(x),f^n(y))\to 0, n\to \infty \}   \]
\[\textrm{ and }\] 
\[W^{u}(x)=\{y\in X; d(f^n(x),f^n(y))\to 0, n\to -\infty \}.   \]
If $p$ is a hyperbolic periodic point for $f$, denote $$W^s(\SO(p))=\bigcup_{z\in \SO(p)} W^s(z) \textrm{ and } W^u(\SO(p))=\bigcup_{z\in \SO(p)} W^u(z).$$

We say that $\Lambda$ is a \textit{homoclinic class} if there is a hyperbolic periodic point $p\in \Lambda$ such that $$\Lambda=\overline{W^s(\SO(p))\pitchfork W^s(\SO(p)) }.$$
We are now in position a to obtain a proof for Theorem \ref{thm:generic diffeos}.

\begin{proof}[Proof of Theorem \ref{thm:generic diffeos}]
To begin with, recall that every $C^1$-diffeomorphism on a compact manifold has finite topological entropy. According to \cite{C06}, there exists a $C^1$-generic set $\mathcal{R} \subset \operatorname{Diff}^1(X)$ such that if $f \in \mathcal{R}$, then every isolated chain-recurrent class of $f$ is a homoclinic class. In particular, if $f$ is chain-transitive, then $CR(f) = X$ is the unique chain-recurrent class of $f$, which is trivially isolated. Thus, we conclude that $CR(f)$ is indeed a homoclinic class, which implies that $f$ is topologically transitive. In other words, for every $f \in \mathcal{R}$,  transitivity and chain-transitivity are equivalent properties.

Now,  there are only two possibilities for $f \in \mathcal{R}$: either $f$ is transitive, or $f$ is not chain-transitive. Consequently, by Theorem \ref{thmA}, either $f$ is transitive or $f$ admits a H\"{o}lder continuous potential exhibiting a phase transition. This completes the proof.
\end{proof}

\begin{proof}[Proof of Theorem \ref{cor: generic diffeos}]
   First, assume $\dim(X)=2$. It is well known that any $C^1$-diffeomorphism on a compact manifold has finite topological entropy. Now, Ma\~n\'e proved in \cite{M82} that there exists a $C^1$-generic set $\mathcal{R'}\subset Diff^1(X)$ such that if $f\in \SU$, then it is either an Axiom A diffeomorphism or it exhibits infinitely many sinks or sources. Let $\SR$ be given by the intersection of $\SR'$and the residual set given in Theorem \ref{thm:generic diffeos}. Since transitive diffeomorphisms cannot have sources or sinks, any transitive diffeomorphism $f\in \SR$ must by Axiom A and hence Anosov. Consequently, Theorem \ref{thmA} gives that any $f\in \SR$ is either Anosov or has phase transitions, thereby completing the proof of item (1).

\end{proof}

\subsection{Phase Transitions for \texorpdfstring{$C^1$}{C1}-Generic  flows}\label{sec: flows}
In this subsection, we will explore phase transitions in the setting of differentiable flows. We are going to   apply our findings to show that phase transitions are actually a frequent property along the set of $C^1$-vectors fields.  Our main goal here is to provide the reader with proof for Theorem \ref{thm:generic flows}.  Before to proceed with the proof, we need to  recall some fundamental concepts from the theory of differentiable flows.  

Hereafter, $M$ denotes a closed Riemanninan manifold and $\mathfrak{X}^1(M)$ denotes the set of $C^1$-vector fields over $M$ endowed with the $C^1$-topology. We assume that $d$ is the metric induced on $M$ by its Riemannian metric. For any $V\in \mathfrak{X}^1(M)$, let $\psi$ be the flow generated by $V$. Observe that, in this case, $\sigma$ is a singularity of $\psi$, if and only if, $V(\sigma)=0$.  For every compact and $\psi$-invariant subset $\Lambda$  the stable and unstable sets of $\Lambda$ are respectively defined as:
$$
    W^s(\Lambda)= \{x \in M : \omega(x) \subset \Lambda\} \quad \text{ and } 
    W^u(\Lambda)= \{q \in M : \alpha(x) \subset \Lambda\}.$$

 An \emph{attractor} of $\psi$ is a transitive attracting set $\Lambda$, whereas a \emph{repeller} is an attractor for the flow generated by $-V$. We say that $\Lambda$ is a \emph{sink} of $\psi$ if it is a trivial attractor, namely, if it reduces to a single orbit of $\psi$. Conversely, $\Lambda$ is a \emph{source} of $\psi$ if it is a trivial repeller. For an attractor $\Lambda$, the set $W^s(\Lambda)$ defined above is called its \emph{basin of attraction}, which consists of all points whose future orbits converge to $\Lambda$. An attracting set whose basin of attraction is the entire manifold is called a \emph{global attractor}.
    A compact invariant set $\Lambda$ is \emph{hyperbolic} if it exhibits $D\psi$-invaranti splitting $T_\Lambda M = E^s \oplus \langle V \rangle \oplus E^u$ and there are constants $\lambda, C > 0$ such that:
\begin{itemize}
    \item $\|DX_t(x)|_{E^s_x}\| \leq C e^{-\lambda t}$, $\forall x \in \Lambda$, $\forall t \geq 0$.
    \item $\|DX_{-t}(x)|_{E^u_x}\| \leq C e^{-\lambda t}$, $\forall x \in \Lambda$, $\forall t \geq 0$.
\end{itemize}

A singularity or a periodic orbit of $X$ is \emph{hyperbolic} if its orbit is a hyperbolic set of $X$. 
A hyperbolic set $\Lambda$ of $X$ is called \emph{basic} if it is transitive and \emph{isolated}, namely $\Lambda = \bigcap_{t \in \mathbb{R}} X_t(U)$ for some neighborhood $U$ of $\Lambda$. By invariant manifold theory\label{TVE} \cite{HPS} it follows that for every $x \in \Lambda$ the sets
\begin{align*}
    W^{ss}(x) &= \{y \in M : d(\psi_t(x), \psi_t(y)) \to 0, t \to \infty\} \quad \text{and} \\
    W^{uu}(x) &= \{y \in M : d(\psi_t(x), \psi_t(y)) \to 0, t \to -\infty\}
\end{align*}
are invariant $C^1$-manifolds tangent to $E^s_x$ and $E^u_x$ respectively.  For any  $x\in \Lambda$, it holds 
\begin{align*}
    W^{s}(\mathcal{O}(x)) &= \bigcup_{t \in \mathbb{R}} W^{ss}(\psi_t(x)) \quad \text{and} \\
    W^{u}(\mathcal{O}(x)) &= \bigcup_{t \in \mathbb{R}} W^{uu}(\psi_t(x)).
\end{align*}
Furtherore, these sets are invariant manifolds tangent to $E^s_x \oplus \langle V(x) \rangle$ and $\langle V(x) \rangle \oplus E^u_x$, respectively. 
Note that $W^{ss}(\sigma) = W^{s}(\sigma)$ and $W^{uu}(\sigma) = W^{u}(\sigma)$ for every hyperbolic singularity $\sigma$ of $X$. 
We say that a compact and invariant set $\Lambda\subset M$ is a \textit{homoclinic class} if there exists a hyperbolic periodic point $p\in M$ such that $$\Lambda=\overline{W^s(\SO(p))\pitchfork W^s(\SO(p))}.$$

 Unlike the case of surface diffeomorphisms, the possible presence of singularities prevents us from obtaining the generic set constructed in \cite{M82}. Therefore, we shall adopt a different approach. 
A flow $\psi$ is said to satisfy \emph{Axiom~A} if its nonwandering set $\Omega(\psi)$ is hyperbolic and
$
\Omega(\psi)=\overline{\operatorname{Crit}(\psi)}.
$
The celebrated Spectral Decomposition Theorem asserts that if $\psi$ satisfies Axiom~A, then $\Omega(\psi)$ admits a disjoint decomposition
$
\Omega(\psi)=\Lambda_1\cup\cdots\cup\Lambda_k,
$
where each $\Lambda_i$ is a basic set of $\psi$, that is, each $\Lambda_i$ is a transitive hyperbolic set.

Next, we recall the concept of singular Hyperbolic set.
Let $\Lambda$ be an invariant compact set. An $D\psi$-invariant  splitting $T_{\Lambda}M = E \oplus F$ is \emph{dominated} if there exist constants $C, \lambda > 0$ such that for any $t > 0$, and any $x \in \Lambda$, it holds 
\[\frac{||D\psi_t(x)|_{E_x}||}{m(D\psi_t(x)|_{F_x})} \leq Ce^{-\lambda t}\]
where $m(T) = \inf_{\|v\|=1} \|T(v)\|$ the minimum norm of a linear operator $T$.

We say that $\Lambda$ is singular hyperbolic if all of its singularities are hyperbolic and there is a dominated splitting $T_{\Lambda}M = E^{ss} \oplus E^{cu}$ with respect to $D\psi_t$ and constants $C, \lambda > 0$ such that for any $t \geq 0$, and for any $x \in \Lambda$ :
\begin{itemize}
\item $\|D\psi_t(x)|_{E^{ss}(x)}\| \leq Ce^{-\lambda t}$ 
    \item $|\text{det} (D\psi_{t}(x)|_{E^{cu}(x)})| \geq Ce^{\lambda t}$ 
\end{itemize}

An important fact about singular-hyperbolic sets is that they properly extends the concept of hyperbolic set, as evidenced by the famous hyperbolic lemma:

\begin{theorem}\label{hyperboliclemma}
   Let $\psi$ be a $C^1$-flow on a three-dimensional manifold $M$. If $\Lambda\subset M$ is  singular hyperbolic set without singularities, then $\Lambda$ is hyperbolic. 
\end{theorem}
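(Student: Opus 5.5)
The argument is the standard cone-field proof of the hyperbolic lemma for three-dimensional flows. Since $\Lambda$ is compact, invariant and contains no singularity, the vector field is bounded away from zero on it: there is $c>0$ with $c\le\|V(x)\|\le \max\|V\|$ for all $x\in\Lambda$. We will first show that the flow direction lies in $E^{cu}$, then split $E^{cu}$ into the flow line and a uniformly expanded line.

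\textbf{Step 1: $\langle V\rangle\subset E^{cu}$.} Write $V(x)=v^{s}+v^{cu}$ along $E^{ss}\oplus E^{cu}$. Suppose $v^{s}\neq 0$ at some $x\in\Lambda$. The ratio $\|D\psi_t v^{s}\|/\|D\psi_t v^{cu}\|$ decays like $Ce^{-\lambda t}$ by domination, but this only shows that the flow direction is asymptotically close to $E^{cu}$ in forward time. For the reverse direction we use $D\psi_{-t}V(x)=V(\psi_{-t}x)$, whose norm stays bounded between $c$ and $\max\|V\|$. As $t\to\infty$ backwards, domination forces the $E^{ss}$-component of $D\psi_{-t}V(x)$ to dominate the $E^{cu}$-component exponentially. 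The angle between $E^{ss}$ and $E^{cu}$ is bounded below on $\Lambda$, since the bundles are continuous on a compact set. Hence along accumulation points $y=\lim\psi_{-t_n}x$ the flow direction $V(y)\neq0$ lies in $E^{ss}_y$. On $E^{ss}$, however, we have $\|D\psi_t V(y)\|\le Ce^{-\lambda t}\|V(y)\|\to0$, contradicting $\|V(\psi_t y)\|\ge c$. Therefore $V(x)\in E^{cu}_x$ for all $x\in\Lambda$, and since $\dim M=3$ forces $\dim E^{cu}=2$, the flow line is a continuous invariant line field inside $E^{cu}$.

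\textbf{Step 2: the unstable direction.} Set $E^u_x:=E^{cu}_x\cap \langle V(x)\rangle^{\perp}$, or better work with the quotient bundle $N_x=E^{cu}_x/\langle V(x)\rangle$, which is a one-dimensional bundle with an induced linear Poincaré flow $P_t$. Using a basis $\{V(x),w\}$ of $E^{cu}_x$,
\[
|\det D\psi_t|_{E^{cu}}| \asymp \|D\psi_t V(x)\|\cdot\|P_t w\| \cdot \frac{\sin\angle(D\psi_t V, D\psi_t w)}{\sin\angle(V,w)}.
\]
Because $\|D\psi_tV(x)\|=\|V(\psi_t x)\|\in[c,\max\|V\|]$, the volume expansion gives $\|P_t|_{N}\|\ge C'e^{\lambda t}$. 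Thus $N$ is uniformly expanded for the linear Poincaré flow, and $E^{ss}$ is uniformly contracted.

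\textbf{Step 3: a genuine invariant splitting.} We need $T_\Lambda M=E^{ss}\oplus\langle V\rangle\oplus E^{u}$ with $E^u$ a $D\psi_t$-invariant line expanded by $D\psi_{t}$. This step is the main obstacle, because the quotient expansion must be lifted to an invariant subbundle. The plan is to invoke the standard criterion of Doering, Hayashi and Hirsch–Pugh–Shub: a compact invariant set without singularities whose linear Poincaré flow is hyperbolic is hyperbolic for the flow. Directly, one defines
\[
E^u_x=\{v\in E^{cu}_x:\ \|D\psi_{-t}v\|\to0 \text{ as } t\to\infty\}.
\]
The pieces are obtained as follows. A graph-transform argument over the one-dimensional $V$-direction, whose cocycle is bounded, produces a unique invariant line in $E^{cu}$ transverse to $V$ and contracted under $D\psi_{-t}$. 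Uniform backward contraction along this line follows from the quotient expansion together with boundedness of $V$. Combining this with the contraction on $E^{ss}$ yields the hyperbolic splitting of $\Lambda$.
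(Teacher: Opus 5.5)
The paper gives no proof of this statement. It quotes it as the well-known hyperbolic lemma from the singular-hyperbolic literature, so there is no internal argument to compare yours against.

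Your proposal is the standard argument for that lemma and is correct in outline:
\begin{itemize}
\item Step 1 is complete as written. Backward domination plus an accumulation point of $\psi_{-t_n}x$ at which $V$ would lie in $E^{ss}$ forces the flow direction into $E^{cu}$.
\item Step 2 correctly turns area expansion of $E^{cu}$ into expansion of the quotient cocycle on $E^{cu}/\langle V\rangle$, using $c\le\|V\|\le\max\|V\|$.
\item Step 3 is a legitimate appeal either to Doering's criterion or to the triangular-cocycle construction.
\end{itemize}

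A few details should be tightened.
\begin{itemize}
\item The count $\dim E^{cu}=2$ is not forced by $\dim M=3$ alone. It also uses $E^{ss}\neq 0$, which belongs to the standard definition but is not written in the paper's. Without it the statement fails: a periodic orbit with transverse multipliers $1$ and $4$ is volume expanding on all of $TM$ but is not hyperbolic. Once $E^{ss}\neq0$ is assumed, the case $E^{cu}=\langle V\rangle$ is excluded because $\|V(\psi_t x)\|/\|V(x)\|$ is bounded.
\item In Step 2, with the quotient norm on $N$, the exact identity is
\[
|\det D\psi_t|_{E^{cu}_x}|=\frac{\|V(\psi_t x)\|}{\|V(x)\|}\cdot\frac{\|P_t[w]\|}{\|[w]\|}.
\]
So the sine factor in your display is already contained in $\|P_t[w]\|$.
\item To apply Doering's criterion you need hyperbolicity of the linear Poincar\'e flow on the whole two-dimensional normal bundle. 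You must therefore also note that the projection of $E^{ss}$ is contracted by $P_t$. This follows from the uniform lower bound on the angle between $E^{ss}$ and $E^{cu}\ni V$.
\item If you build $E^u$ directly instead, the graph transform must push transverse lines forward from $\psi_{-t}x$ to $x$. The uniform expansion of $E^u$ then needs the resulting slopes relative to $V$ to be uniformly bounded. The contraction property of the graph transform supplies this bound.
\end{itemize}
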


We now recall the singular version of Axiom A flows.

\begin{definition}
    We say that $\psi$ is \emph{singular Axiom A} if there is a finite disjoint decomposition $\Omega(\psi) = \Lambda_1 \cup \cdots \cup \Lambda_n$ such that where each $\Lambda_i$ is eiter:
    \begin{enumerate}
        \item a hyperbolic basic set,
        \item a singular hyperbolic attractor or
        \item a singular hyperbolic repeller
    \end{enumerate}
    
\end{definition}
\begin{remark}
    It follows from the spectral decomposition theorem  above that every Axiom A flow is  singular Axiom A.
\end{remark}

\begin{definition}
Suppose $\psi$ is a singular Axiom A flow and denote $\Omega(\psi)=\Lambda_1\cup\cdots\cup \Lambda_n$. we say that $\psi$ has a  \emph{cycle} if there are $1\leq i_1<...<i_k\leq n$ such that, for all $j = 1, \dots, k-1$ it holds:  $$W^u(\Lambda_{i_j}) \cap W^s(\Lambda_{i_{j+1}}) \neq \emptyset  \textrm{ and } W^u(\Lambda_{i_k}) \cap W^s(\Lambda_{i_{1}}) \neq \emptyset. $$ Otherwise, we say that $\psi$ is without cycles.
\end{definition}

Our last ingredient is the following result. 
\begin{theorem}[\cite{MP03}]\label{thm_residual}
Let $M$ be a three-dimensional manifold. There is a residual ser $\SR\subset \mathfrak{X}^1(M)$ such that if $V\in \SR$ and $\psi$ is the flow generated by $V$, then either:
\begin{enumerate}
    \item $\psi$ has infinitely many sinks or sources or
    \item $\psi$ is singular Axiom A without cycles.
\end{enumerate} 
\end{theorem}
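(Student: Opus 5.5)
Theorem~\ref{thm_residual} is quoted from \cite{MP03}, so the paper will simply invoke it. If I had to reconstruct it, I would argue by a $C^1$-generic dichotomy on the number of sinks and sources. The proof would be organized around the \emph{star} property: a vector field is star if it has a $C^1$-neighborhood in which every critical element (singularity or periodic orbit) is hyperbolic.

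\textbf{Step 1: a residual set of continuity points.} I would first build a residual set $\SR_0$ of Kupka--Smale vector fields. The map counting sinks and sources of period at most $n$ (with values in $\mathbb{N}\cup\{\infty\}$) is lower semicontinuous. By a standard Baire argument, the fields at which these counting maps are locally constant form a residual set. For $V\in\SR_0$, either $\psi$ has infinitely many sinks or sources, giving alternative (1), or there is a neighborhood $\mathcal{U}$ of $V$ on which the number of sinks and sources is uniformly bounded.

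\textbf{Step 2: the bounded case gives a star flow.} In the bounded case I would show that $V$ is a star flow. Suppose instead that arbitrarily close to $V$ there are non-hyperbolic periodic orbits. Franks' lemma for flows then lets us perturb the linear Poincaré flow along such an orbit to produce a new sink or source. Iterating this with the Pugh closing lemma and Hayashi's connecting lemma should produce arbitrarily many sinks or sources nearby, contradicting the uniform bound on $\mathcal{U}$. For non-hyperbolic singularities the same idea applies, using a local perturbation near the singularity.

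\textbf{Step 3: structure of three-dimensional star flows.} Once $V$ is star in dimension three, I would use the theory of star flows: Liao and Mañé estimates on the linear Poincaré flow and the Morales--Pacifico--Pujals analysis of robust transitive sets. These should give that, for generic star $V$, every chain-recurrent class is either a hyperbolic basic set or a singular hyperbolic attractor or repeller. The sign of the singularities' index decides which of the two singular cases occurs. I would then show that there are only finitely many classes. Hyperbolic classes are isolated. Singular hyperbolic attractors and repellers contain singularities, of which there are finitely many, and each is attached to a unique class. So $\Omega(\psi)$ decomposes as required by the singular Axiom A definition.

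\textbf{Step 4: no cycles.} Finally I would exclude cycles generically. If there were a cycle among the pieces $\Lambda_i$, the connecting lemma would merge the classes involved in the cycle into a single larger chain-recurrent class. For generic $V$, chain-recurrent classes vary semicontinuously, so this merged class would also be present at $V$ itself. That contradicts the decomposition obtained in Step 3, so a singular Axiom A field in the residual set has no cycles.

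\textbf{Main obstacle.} I expect the hardest step to be Step 3, specifically proving that classes containing singularities are singular hyperbolic. This requires building the dominated splitting $E^{ss}\oplus E^{cu}$ over the singular class and showing volume expansion along $E^{cu}$, which is where the three-dimensionality is essential. My first attempt would be to extend the dominated splitting of the linear Poincaré flow, via the extended (blown-up) linear Poincaré flow at the singularities, in the manner of Li--Gan--Wen.
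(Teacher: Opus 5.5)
The paper gives no proof of this statement; it is imported from \cite{MP03}, so your outline has to stand on its own. Steps 1 and 2 are the standard Ma\~n\'e-type reduction to a star flow and work up to fixable details (below). Step 3, however, is not an argument: ``these should give'' restates the conclusion of \cite{MP03}. Two separate facts are hidden there.

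The first is that a chain-recurrence class of a generic three-dimensional star flow containing singularities is singular hyperbolic. You flag this yourself and point to the extended linear Poincar\'e flow, which is a reasonable route but is only named, not carried out.

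The second, which you do not address at all, is that such a class is an \emph{attractor or a repeller}, i.e.\ transitive and an attracting set for $\psi$ or for the reversed flow. Neither singular hyperbolicity nor the index of the singularities gives this. The index only tells you \emph{which} alternative occurs once you know that one of them does. Moreover, there are transitive singular hyperbolic sets with a Lorenz-like singularity that are of saddle type (the singular horseshoes of Labarca and Pacifico). So a separate genericity argument, via the connecting lemma, is needed: first Lyapunov stability of the class for $\psi$ or for the reversed flow, then a trapping region. This is exactly the property the paper relies on later, namely the trapping region of a singular piece in Case 2 of the proof of Theorem \ref{thm:generic flows}. It cannot be left implicit.

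The remaining issues are fixable:
\begin{enumerate}
\item In Step 1, use the total number $N(V)\in\mathbb{N}\cup\{\infty\}$ of sinks and sources. It is lower semicontinuous because hyperbolic sinks and sources persist, so its continuity points form a residual set, and at such a point with $N(V)<\infty$ it is constant near $V$. The period-truncated counts only give neighbourhoods depending on the period bound, hence no uniform bound on one neighbourhood.
\item With $N$ locally constant, a single sink or source produced by Franks' lemma already gives the contradiction in Step 2, so no iteration with closing or connecting lemmas is needed.
\item Your treatment of singularities in Step 2 is wrong as stated: a singularity with eigenvalues $-1,0,1$ cannot be turned into a sink or source by a local perturbation. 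It is also unnecessary, since for Kupka--Smale $V$ every singularity of a $C^1$-nearby field is the continuation of a hyperbolic singularity of $V$.
\item In Step 3, counting singularities does not bound the number of hyperbolic classes. Finiteness follows from compactness instead. A Hausdorff limit of infinitely many distinct classes $C_n$ is chain transitive, hence lies in some class $C$. If $C$ has an isolating neighbourhood $W$, then for large $n$ we get $C_n\subset W$, hence $C_n\subset C$, a contradiction.
\item Step 4 takes an unnecessary detour through perturbations, and its conclusion is immediate. A point of $W^u(\Lambda_i)\cap W^s(\Lambda_j)$ yields $\eps$-$T$-chains from $\Lambda_i$ to $\Lambda_j$ for all $\eps,T>0$. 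A cycle would therefore put all its pieces into a single chain-recurrence class of $V$ itself, which is impossible once Step 3 identifies the $\Lambda_i$ with distinct classes.
\end{enumerate}
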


We are finally in a position of proving Theorem \ref{thm:generic flows}.

\begin{proof}[Proof of  Theorem \ref{thm:generic flows}]
The proof for the first part follows the same lines as the proof of Theorem \ref{thm:generic diffeos}. The crucial step is to show that $C^1$-generically, the properties of transitivity and chain-transitivity are also equivalent for vector fields. To see why it holds, let $X$ be a closed Riemannian manifold and fix $\SR\subset \mathfrak{X}^1(X)$ be the residual set given in  \cite[Lemma 3.12]{GYZ22}. Fix $V\in \SR$ and assume the flow  $\psi$ generated by $V$ is chain-transitive.  Hence, $CR(\psi)=X$ is the only chain-reurrent class of $\psi$. So, items (3) and (5) of \cite[Lemma 3.2]{GYZ22} give that $X$ is homoclinic class and hence $\psi$ is transitive. Now, the proof finishes by applying Lemma \ref{lemma: non-transitive} in combination with \ref{thmA}.    

Next, assume $\dim(M)=3$ and let $\mathcal{R}=\SR_0\cap R_1$, where $\SR_0$ and $\SR_1$ are, respectively, the residual sets given in Theorems \ref{thm:generic flows} and \ref{thm_residual}. The set $\SR$ is clearly a residual set. Fix $V \in \mathcal{R}$ and assume that its generated flow $\psi$  is not a transitive Anosov flow. Then, according to \ref{thm_residual}, exactly one of the following two cases holds:
\begin{enumerate}
    \item $\psi$ has infinitely many sinks or sources;
    \item $\psi$ is a singular Axiom A flow without cycles.
\end{enumerate}
We shall proceed with the proof by analyzing each case individually.

\medskip
\noindent\textit{\textbf{Case 1:} $\psi$ has infinitely many sinks or sources.}
\medskip

This case is an immediate consequence of Theorem \ref{thmA}. Indeed, $\psi$ has infinitely many sinks of sources, then $CR(\psi)$ cannot be chain-transitive because the chain-recurrent class of any sink or source is trivial.  

\medskip
\noindent\textit{\textbf{Case 2:} $\psi$ is singular Axiom A without cycles}
\medskip

Suppose $\psi$ is a singular Axiom A flow without cycles, and denote its non-wandering set by $\Omega(\psi) = \Lambda_1 \cup \dots \cup \Lambda_n$. First, assume that every $\Lambda_i$ is a hyperbolic basic piece. In this case, $\psi$ is indeed a classical Axiom A flow. Since $\psi$ is not a transitive Anosov flow, then it has least two distinct basic pieces. Furthermore, since $\psi$ has no cycles, at least one of these basic pieces must be an attractor, and another must be a repeller. Consequently, the chain recurrent set $CR(\psi)$ is not chain-transitive, and the result follows from Theorem~\ref{thmA} and Lemma \ref{lemma: non-transitive}.

Now, suppose that at least one of the sets $\Lambda_i$ is a singular hyperbolic set which is not regular hyperbolic (the case where $\Lambda_i$ is a repeller follows by applying the same argument to the flow generated by $-V$). In this scenario, $\Lambda_i \cap \operatorname{Sing}(\psi) \neq \emptyset$, and each singularity in $\Lambda_i$ is hyperbolic. Let $T_{\Lambda_i} M = E^{ss} \oplus E^{cu}$ be the singular hyperbolic splitting over $\Lambda_i$. Since the stable bundle $E^{ss}$ is uniformly contracting, standard invariant manifold theory \cite{HPS} ensures it integrates into a strong stable manifold $W^{ss}(x)$ for each $x \in \Lambda_i$. By \cite{MP03}, if $\sigma \in \Lambda_i$ is a singularity, then $\dim(W^{ss}(\sigma)) = 1$ and $W^{ss}(\sigma) \cap \Lambda_i = \{\sigma\}$. 

Finally, let $x\in W^{ss}(\sigma)\setminus\{\sigma\}$.  Since $\Lambda_i$ is an attractor,  $\alpha(x) \subset (M \setminus U)$, where $U$ is the trapping region of $\Lambda_i$. Otherwise $x\in \Lambda_i$, contradicting the choice of $x$. Consequently, $CR(\psi)$ cannot be chain-transitive, and the proof is again completed by applying Theorem~\ref{thmA} and Lemma \ref{lemma: non-transitive}.

\end{proof}

\subsection{\texorpdfstring{$C^0$}{C0}-dense homeomorphisms}
Let us now concentrate in the Proof of Theorem \ref{thm: dense homeo}. For this sake, we need first prove some approximation and perturbation lemmas. Let us assume $X$ is a compact and coundaryless topological $n$-manifold and denote $LIP(X)$ for the set of Lipschitz continuous homeomorphisms of $X$.  The first ingredient of our proof is a classical result of density of Lipschitz homeomorphisms in topological manifolds which is presented in the following as a consequence of   \cite[Corollary 4.5]{TV82} and \cite[Theorem 4.6]{TV82}.   

\begin{lemma}\label{lemma:LP-manifold}
 If $n\neq 4$, then  $Lip(X)$ is $C^0$-dense in $Homeo(X)$.    
\end{lemma}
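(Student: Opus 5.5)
The plan is to obtain this density statement from Sullivan's theorem that every topological manifold of dimension $n\neq 4$ admits a Lipschitz structure, which is unique up to isotopy, combined with the approximation results of Tukia and Väisälä \cite{TV82}. We fix on $X$ a Lipschitz atlas compatible with its topology and a metric $d$ in which the charts are bi-Lipschitz. Lipschitz regularity is preserved under passing between such metrics, so it suffices to work with this one. Given $f\in Homeo(X)$ and $\varepsilon>0$, the goal is to produce a bi-Lipschitz homeomorphism $g$ with $d_{C^0}(f,g)<\varepsilon$. We will in fact aim for $g$ and $g^{-1}$ both Lipschitz, which is stronger than needed.

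The first step is to treat the local problem in Euclidean space. This is the content of \cite[Corollary 4.5]{TV82}: a homeomorphism between open subsets of $\R^n$, $n\neq 4$, can be approximated in the fine $C^0$ topology by a bi-Lipschitz (even locally PL in $n\neq 4$ when available) homeomorphism. The approximation can be chosen to agree with the original map outside a neighborhood of a prescribed compact set whenever the original map is already Lipschitz near the boundary of that set.

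The second step globalizes this using a finite cover of $X$ by chart balls $B_1,\dots,B_k$ together with shrunken balls $B_i'\Subset B_i$ that still cover $X$. We modify $f$ inductively. At stage $i$ the current map $f_{i-1}$ is already Lipschitz near $\overline{B_1'}\cup\dots\cup\overline{B_{i-1}'}$. We read $f_{i-1}$ on $B_i$ in coordinates, choosing the cover fine relative to $\varepsilon$ and the uniform continuity of $f$ so that $f_{i-1}(B_i)$ lies in a single chart. We then apply the relative version of the local approximation, which is \cite[Theorem 4.6]{TV82}, to make the map Lipschitz on $B_i'$ without destroying the Lipschitz property already achieved. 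Keeping the errors at each stage below $\varepsilon/k$ and small enough that the maps remain homeomorphisms and images stay within charts, the map $g=f_k$ is Lipschitz on each $B_i'$, hence globally Lipschitz by compactness, and it is $\varepsilon$-close to $f$.

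The main obstacle is this relative, or "controlled", approximation step. We must smooth $f_{i-1}$ near $B_i'$ while leaving it fixed where it is already Lipschitz, and the result must still be a homeomorphism. In dimension $4$ exactly this step fails, which is why the hypothesis $n\neq 4$ enters through the existence and uniqueness of Lipschitz structures (and the annulus-type theorems underlying \cite{TV82}). We will not reprove it but will invoke the cited corollary and theorem of \cite{TV82} as a black box. The remaining work is routine bookkeeping of the $C^0$ errors across finitely many charts.
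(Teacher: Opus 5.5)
Your proposal is correct and takes essentially the same route as the paper. The paper gives no argument beyond citing \cite[Corollary 4.5]{TV82} and \cite[Theorem 4.6]{TV82}, i.e.\ Sullivan's existence of Lipschitz structures for $n\neq 4$ together with the Tukia--V\"ais\"al\"a Lipschitz approximation of homeomorphisms, used as a black box exactly as you do. Your chart-by-chart relative induction and your explicit choice of a metric adapted to a Lipschitz atlas only make visible what the paper leaves inside the citation; note that the induction needs the local approximants to be only locally bi-Lipschitz on open sets, not globally bi-Lipschitz as you wrote.
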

The next ingredient is the following lemma whose proof will be omitted, since it can be easily obtained from standard topology arguments (see for instance \cite{Hirsh}).

\begin{lemma}\label{lemma:translation}
    For any $r>0$ and any pair of point $x,y\in B_{\frac{r}{3}}(0)\subset \R^n$ there is a Lipschitz homeomorphism $h:\overline{B_{r}(0)}\to \overline{B_{r}(0)}$ such that: $$h(x)=y \textrm{ and } h|_{\partial B_{r}(0)}=Id_{\partial B_{r}(0)}.$$
\end{lemma}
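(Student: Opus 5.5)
Here $x,y\in B_{r/3}(0)$ and $h$ must be a Lipschitz homeomorphism of $\overline{B_r(0)}$ that sends $x$ to $y$ and is the identity on $\partial B_r(0)$. The plan is to write down $h$ explicitly as a "damped translation" by the vector $v=y-x$. Set $|v|<2r/3$ and choose a Lipschitz cutoff $\rho:\overline{B_r(0)}\to[0,1]$ with
\[
\rho(z)=\min\Bigl\{1,\ \max\Bigl\{0,\ \tfrac{3}{r}\bigl(r-|z|\bigr)-1\Bigr\}\Bigr\}\cdot c ,
\]
or, more simply, a radial piecewise-linear function that equals $1$ on $\overline{B_{r/3}(0)}$, vanishes on $\partial B_r(0)$, and has Lipschitz constant $L=\tfrac{3}{2r}$. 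Then define the time-one map $h=\Phi_1$ of the vector field $W(z)=\rho(z)\,v$, that is, the flow of $\dot z=\rho(z)v$.

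The first step is to check that this flow is well defined and stays inside the ball. The field $W$ is globally Lipschitz, so the flow exists. It vanishes on $\partial B_r(0)$, so the sphere consists of fixed points and the closed ball is invariant. Hence each $\Phi_t$ is a bi-Lipschitz homeomorphism of $\overline{B_r(0)}$ that restricts to the identity on $\partial B_r(0)$, with Lipschitz constant at most $e^{t L|v|}$ by Gronwall.

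The second step is to verify that $h(x)=y$. This is the one real point to check. The trajectory from $x$ moves along the segment $x+sv$, and it must stay in the region where $\rho\equiv1$ for the whole time interval, so that it moves at unit speed along $v$ and arrives exactly at $x+v=y$ at time $1$. The segment $[x,y]$ lies in $B_{r/3}(0)$ by convexity, and $\rho=1$ there. So the solution $z(t)=x+tv$ for $t\in[0,1]$ is the unique solution, and $\Phi_1(x)=y$.

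If a hands-on formula is preferred over the ODE, there is an alternative. Use the radial "coning" map that sends each segment from $x$ to a boundary point $\theta$ affinely onto the segment from $y$ to $\theta$. This map is bi-Lipschitz because $x$ and $y$ stay a definite distance, at least $2r/3$, from the sphere, which bounds the distortion. However, the flow argument avoids the case analysis, so I would present the flow version.
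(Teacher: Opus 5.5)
The paper does not prove this lemma (it defers to standard topology arguments, citing Hirsch), and your construction is the standard flow argument behind the homogeneity lemma. Your proof is correct: $\Phi_1$ is the time-one map of the Lipschitz field $W(z)=\rho(z)(y-x)$, extended by zero outside the ball, with $\rho\equiv 1$ on $\overline{B_{r/3}(0)}$ and $\rho=0$ on $\partial B_r(0)$. Uniqueness of solutions makes the sphere pointwise fixed and the closed ball invariant. Gronwall gives the Lipschitz bound $e^{L|y-x|}$ for both $\Phi_1$ and its inverse $\Phi_{-1}$. Convexity of $B_{r/3}(0)$ keeps the orbit of $x$ on the segment to $y$, so $\Phi_1(x)=y$.

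Two small remarks. Drop the stray factor $c$ in your first formula for $\rho$: it must equal $1$, since for $0<c<1$ you get $\Phi_1(x)=x+c(y-x)\neq y$. Also, the bi-Lipschitz property you obtain is exactly what the paper later assumes in the closing lemma, where $h_0$ is taken to have a Lipschitz inverse.
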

We now establish the perturbation lemmas that will be used in the sequel. The first one is a variant of the classical $C^0$-closing lemma for homeomorphisms. The overall structure of our proof is similar to the standard one found in the literature. However, since we require the perturbed homeomorphism to be Lipschitz continuous, we need to carefully track the Lipschitz regularity throughout the construction. This regularity issue is often overlooked or left implicit in classical proofs. Therefore, we provide a self-contained proof here, with emphasis on the Lipschitz estimates.

\begin{theorem}\label{lemma:closing}
    For any $f\in Lip(X)$ and any $\eps>0$, there is  $g\in Lip(X)$ such that $g$ has a periodic point and $d_{C^0}(f,g)\leq \eps$. 
\end{theorem}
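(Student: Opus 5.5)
Starting from a Lipschitz homeomorphism $f$, the plan is to find a point that nearly returns to itself, and then close its orbit by a small Lipschitz perturbation supported in a tiny coordinate ball. If $f$ already has a periodic point, take $g=f$. Otherwise, fix $\eps>0$. Since $X$ is compact, $\Omega(f)\neq\emptyset$; in fact, every $\omega(x)$ is nonempty and consists of non-wandering points. Pick $p\in\Omega(f)$ and a coordinate chart $\varphi:W\to\R^n$ around $p$, with $\varphi(p)=0$. In this chart, choose $r>0$ so small that $\varphi^{-1}(\overline{B_r(0)})$ has diameter less than $\eps$ and lies inside $W$. We may also assume that $\varphi$ is bi-Lipschitz on this ball. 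Since $X$ is only a topological manifold, this is not automatic. The simplest fix is to measure Lipschitz regularity with respect to the Lipschitz structure provided by \cite{TV82}, which exists for $n\neq 4$ and is the setting in which $LIP(X)$ is meant. I would state this convention explicitly.

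Because $p$ is non-wandering, the open set $V=\varphi^{-1}(B_{r/3}(0))$ satisfies $f^k(V)\cap V\neq\emptyset$ for some $k\geq 1$. Choose the smallest such $k$ and a point $x\in V$ with $f^k(x)\in V$. Next, arrange that the intermediate iterates $f(x),\dots,f^{k-1}(x)$ avoid the support ball $B=\varphi^{-1}(\overline{B_r(0)})$. This is the standard difficulty in the $C^0$ closing lemma. Minimality of $k$ only guarantees that the intermediate iterates avoid $V$, not all of $B$. To handle this, I would work with a pair of nested radii $r'<r$. Take the first return of the orbit of $x$ to the small ball $\varphi^{-1}(B_{r'/3}(0))$. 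If some intermediate iterate $f^j(x)$, with $0<j<k$, lies in $B$, then replace the pair $(x,f^k(x))$ by a pair drawn from $\{x,f(x),\dots,f^k(x)\}$: choose two orbit points $f^i(x)$ and $f^j(x)$, with $i<j$, both lying in $B$, whose intermediate iterates lie outside $B$. Then shrink the closing ball to a small ball around $f^j(x)$ that still contains $f^i(x)$ in its concentric third. Alternatively, pass to the last entry into $B$ before the return. Since the orbit segment is finite and the points are distinct (there are no periodic points), there is a positive distance between the finitely many relevant points. This lets us pick the radius so that only the two chosen orbit points lie in the ball. Concretely: among the finitely many points $f^0(x),\dots,f^k(x)$ lying in $\varphi^{-1}(B_{r/3}(0))$, choose two, $f^i(x)$ and $f^j(x)$ with $i<j$, that are consecutive in time among those lying in this ball. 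Then shrink $r$ slightly, keeping both points inside the smaller third-ball, so that the remaining finitely many orbit points in between, which lie outside the third-ball, also lie outside the closed ball. This shrinking step needs care, and I expect it to be the main obstacle.

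With the pair fixed, write $y=f^j(x)$ and $z=f^i(x)$. Apply Lemma~\ref{lemma:translation} in the chart to obtain a Lipschitz homeomorphism $h$ of $\overline{B_r(0)}$ sending $\varphi(y)$ to $\varphi(z)$ and equal to the identity on the boundary. Extend $H=\varphi^{-1}h\varphi$ by the identity outside $B$, and set $g=H\circ f$. The map $H$ is Lipschitz on $B$ and equal to the identity elsewhere, and it is continuous across $\partial B$. Gluing Lipschitz maps along a sphere on which they agree yields a Lipschitz map: for points on opposite sides, use a point on the segment between them that meets the boundary, and the chart is bi-Lipschitz. Hence $H$ is globally Lipschitz, and so $g=H\circ f\in LIP(X)$. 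Moreover $d_{C^0}(g,f)\leq \sup_w d(H(w),w)\leq \operatorname{diam} B<\eps$. Finally, $g^{j-i}(z)=H(f(g^{j-i-1}(z)))$. The intermediate points $f^{i+1}(x),\dots,f^{j-1}(x)$ lie outside $B$, so $g$ agrees with $f$ along the orbit until it reaches $y=f^j(x)$. If the perturbation is placed as $g=H\circ f$, then $g$ maps $f^{j-1}(x)$ to $H(y)=z$, provided $f^{i+1}(x),\dots,f^{j-1}(x)\notin B$ (these are the arguments to which $H$ is applied). Thus $z$ is a periodic point of $g$ of period $j-i$, completing the proof.
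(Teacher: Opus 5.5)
There is a genuine gap, and it sits exactly at the step you flagged. Your minimal return time $k$ is taken with respect to $V=\varphi^{-1}(B_{r/3}(0))$. Minimality therefore only keeps $f(x),\dots,f^{k-1}(x)$ out of $V$, whereas $H$ moves points throughout the larger ball $B=\varphi^{-1}(\overline{B_r(0)})$. None of your proposed repairs closes this.

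\begin{itemize}
\item In your ``concrete'' version, minimality of $k$ already implies that $x$ and $f^k(x)$ are the only points of $x,f(x),\dots,f^k(x)$ lying in $V$: if $f^l(x)\in V$ with $0<l<k$, then $f^l(V)\cap V\neq\emptyset$. So the time-consecutive pair is just $(0,k)$, and nothing is gained.
\item Shrinking $r$ ``slightly'' cannot work. An intermediate iterate is only known to avoid the open ball $B_{r/3}(0)$, so it may sit at chart distance exactly $r/3$. Excluding it from $\overline{B_{r'}(0)}$ forces $r'<r/3$. The new third-ball $B_{r'/3}(0)\subset B_{r/9}(0)$ then need not contain $x$ or $f^k(x)$.
\item Recentering a ball at $f^j(x)$ with $f^i(x)$ in its concentric third fails for the same reason. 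Its radius must exceed $3\,d(f^i(x),f^j(x))$, so it can leave $B$ and capture intermediate iterates.
\end{itemize}
Distinctness of finitely many orbit points does give \emph{some} positive gap. Nothing, however, gives the factor-$3$ gap that the third-ball form of Lemma~\ref{lemma:translation} requires.

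The missing idea is to make minimality refer to the whole support, and to remove the factor-$3$ constraint. Take $k$ minimal with $f^k(B)\cap B\neq\emptyset$ for the \emph{open} ball $B=\varphi^{-1}(B_r(0))$, and pick $x\in B$ with $f^k(x)\in B$. Then $f^l(x)\notin B$ for $0<l<k$. You now need a Lipschitz homeomorphism, equal to the identity off a compact subset of $B_r(0)$, sending $a=\varphi(f^k(x))$ to $b=\varphi(x)$ for \emph{arbitrary} $a,b\in B_r(0)$. This follows from Lemma~\ref{lemma:translation} by chaining:
\begin{itemize}
\item let $s>0$ be the distance from the segment $[a,b]$ to $\partial B_r(0)$;
\item pick $a=c_0,c_1,\dots,c_m=b$ on the segment with $|c_{l+1}-c_l|<s/6$;
\item apply the lemma on $\overline{B_{s/2}(c_l)}$ to send $c_l$ to $c_{l+1}$;
\item extend each map by the identity and compose.
\end{itemize}
With this $H$, your last paragraph (the gluing argument, the $C^0$ estimate, and $g^k(x)=x$) goes through unchanged.

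For comparison, the paper's proof takes a recurrent point and its first return time $n$ to the full support $V$. This correctly keeps the intermediate iterates out of the support. It then asserts that after ``shrinking $V$'' one has $f^n(x)\in D(r/3,p)$. Shrinking $V$ changes $n$, so that step needs the same observation to be justified.
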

\begin{proof}
 Let $f \in \operatorname{Lip}(X)$. We may assume without loss of generality that $\operatorname{Per}(f) = \varnothing$; otherwise the conclusion is trivial. Since $X$ is compact, the dynamical system $(X, f)$ admits a recurrent point $x \in X$. Let $V$ be a sufficiently small neighborhood of $x$, and let $n$ be the first positive return time of $x$ to $V$, i.e.,
\[
n := \min \{ k \geq 1 : f^k(x) \in V \}.
\]

Let $\phi : V \to B_r(0) \subset \mathbb{R}^d$ be a Lipschitz continuous coordinate chart such that $\phi(x) = 0$. For each $0 < s \leq r$, define
\[
D(s, p) := \phi^{-1}(B_s(0)).
\]
By shrinking $V$ if necessary, we may assume that $f^n(x) \in D(r/3, p)$. By Lemma \ref{lemma:translation}, there exists a Lipschitz homeomorphism
\[
h_0 : B_r(0) \to B_r(0)
\]
satisfying
\begin{equation} \label{eq:closing}
h_0(\phi(f^n(x))) = \phi(x) = 0
\quad \text{and} \quad
h_0|_{\partial B_r(0)} = \operatorname{Id}|_{\partial B_r(0)}.
\end{equation}

Define
\[
h := \phi^{-1} \circ h_0 \circ \phi : V \to V.
\]
Since $h_0$ and $\phi$ are Lipschitz homeomorphisms (with Lipschitz inverses), $h$ is also a Lipschitz homeomorphism. Moreover, from \eqref{eq:closing} we obtain
\[
h(f^n(x)) = x
\quad \text{and} \quad
h|_{\partial V} = \operatorname{Id}|_{\partial V}.
\]
Consequently, we can extend $h$ continuously to all of $X$ by setting
\[
h|_{X \setminus V} := \operatorname{Id}_{X \setminus V}.
\]
This extension is clearly Lipschitz.

Now define a new map
\[
g := h \circ f : X \to X.
\]
Clearly $g$ is Lipschitz, being a composition of Lipschitz maps. Observe that $f$ and $g$ coincide on $X \setminus f^{-1}(V)$, since $h = \operatorname{Id}$ outside $V$. Moreover, for any $\varepsilon > 0$, by choosing $V$ sufficiently small we can ensure that
\[
d(f(y), g(y)) \leq \varepsilon \quad \text{for all } y \in f^{-1}(V).
\]
This follows from the uniform continuity of $h$ and the fact that $h$ tends to the identity as $V$ shrinks (since $h$ is the identity on $\partial V$). Hence,
\[
d_{C^0}(f, g) \leq \varepsilon.
\]

It remains to show that $x$ is a periodic point for $g$. Since $n$ was chosen as the first positive return time of $x$ to $V$, two cases arise:

\begin{enumerate}
\item If $n = 1$, then $f(x) \in V$, and therefore
\[
g(x) = h(f(x)) = x,
\]
because $h(f(x)) = x$ by the defining property of $h$.

\item If $n \geq 2$, then for every $i = 1, \dots, n-1$, we have $f^i(x) \notin V$ by the minimality of $n$. In particular,
\[
f^i(x) \notin f^{-1}(V) \quad \text{for } i = 1, \dots, n-2,
\]
 Thus,
\[
g^i(x) = f^i(x) \quad \text{for } i = 1, \dots, n-1,
\textrm{ and }
g^n(x) = h(f^n(x)) = x.
\]
 
\end{enumerate}

Hence $x$ is periodic of period $n$ for $g$ and  the proof is complete.
\end{proof}

We continue with our perturbation lemmas. At this point, it is worth clarifying an important subtlety in the proof of Theorem \ref{thm: dense homeo}. Our strategy relies on applying Theorem \ref{thmA}, which is stated for homeomorphisms with finite topological entropy. Therefore, in order to obtain the desired density result, we must ensure that the approximating homeomorphisms we construct have finite topological entropy. This is a nontrivial issue, as the $C^0$-topology is extremely flexible: indeed, infinite entropy is $C^0$-generic, meaning that an arbitrarily small $C^0$-perturbation of a given Lipschitz homeomorphism can cause its entropy to explode. Consequently, our perturbations must be carried out carefully to avoid entropy explosion at every step. The next theorem  asserts that we can perturb a homeomorphism with a fixed point by blowing it up into an entire neighborhood of fixed points, while keeping the topological entropy finite.    

\begin{theorem}\label{lemma:explosion}
Let $X$ be a compact manifold of dimension $n$, $f: X\to X$ a Lipschitz, and $p \in X$ a fixed point of $f$. Then, for every $\varepsilon > 0$ there exist an  open neighborhood $U$ of $p$ and a homeomorphism $\tilde{f}: X \to X$ satisfying:
\begin{enumerate}
    \item There exists a closed ball $B \subset U$ centered at $p$ such that $\tilde{f}|_B = \mathrm{id}_B$;
    \item $d_{C^0}(\tilde{f}, f) < \varepsilon$;
    \item $h_{\mathrm{top}}(\tilde{f}) \leq h_{\mathrm{top}}(f) < \infty$.
\end{enumerate}
\end{theorem}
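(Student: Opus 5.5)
The plan is to modify $f$ only inside a small coordinate ball around $p$, so that the new map is the identity on a smaller ball, and to control the entropy through the variational principle. Fix a chart $\phi:V\to B_r(0)\subset\R^n$ with $\phi(p)=0$. Since $f(p)=p$ and $f$ is Lipschitz with constant $L$, for $s$ small we have $f(\phi^{-1}(B_s(0)))\subset \phi^{-1}(B_{Ls'}(0))$ for a comparable radius $s'$, and everything stays inside $V$. Pick radii $0<\rho<\rho_1<\rho_2$, all much smaller than $\varepsilon$, and set $U=\phi^{-1}(B_{\rho_2}(0))$ and $B=\phi^{-1}(\overline{B_\rho(0)})$. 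Let $F=\phi\circ f\circ\phi^{-1}$ be the local representative. I will build a homeomorphism $\tilde F$ of $\R^n$ which equals the identity on $\overline{B_\rho(0)}$ and equals $F$ outside $B_{\rho_2}(0)$. Then I define $\tilde f=\phi^{-1}\circ\tilde F\circ\phi$ on $U$ and $\tilde f=f$ elsewhere. Item (1) holds by construction. Item (2) holds because both $f$ and $\tilde f$ map $U$ into a set of diameter $O(L\rho_2)$ around $p$, so their $C^0$ distance is at most of order $L\rho_2<\varepsilon$.

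For the construction of $\tilde F$, I would use a radial (Alexander-trick type) interpolation. Write $F_t(z)=F(tz)/t$ for $t\in(0,1]$; in the topological category $F_0$ need not exist, so I instead isotope through homeomorphisms. The cleaner choice is to first use Lemma \ref{lemma:translation}-style local surgery. On the annulus $\rho_1\le|z|\le\rho_2$ we interpolate between $F$ and the map $z\mapsto$ ``$F$ restricted to a sphere, coned''. Concretely, for $\rho_1\le |z|\le \rho_2$ define $\tilde F$ so that each sphere $\{|z|=\tau\}$ goes to a sphere-like set that moves from $F(\{|z|=\rho_2\})$ toward the round sphere of radius $\tau$. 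On $\rho\le |z|\le\rho_1$, I use the annulus theorem (valid for $n\neq 4$, as in \cite{TV82}) to identify the region between $F(S_{\rho_1})$ and $S_{\rho}$ with a standard annulus, and then glue radially. Lipschitz regularity is not required of $\tilde f$ by the statement, so the topological annulus theorem suffices.

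For item (3), I use the variational principle for $\tilde f$. Let $K=X\setminus \bigcup_{k\in\Z}\tilde f^k(U)$. On $K$ the maps $f$ and $\tilde f$ agree. Every ergodic $\tilde f$-invariant measure either gives positive mass to $U$ or is supported on $K$. Those supported on $K$ are $f$-invariant with the same entropy, so $h_\mu(\tilde f)\le h_{top}(f)$. For measures charging $U$, I want to arrange the construction so that $U$ is dynamically trivial for $\tilde f$. The idea is that orbits entering the annulus $U\setminus B$ are pushed monotonically toward $B$ or away from it, so the annulus is wandering; this requires choosing the radial interpolation with a strictly monotone radial coordinate. Then such an ergodic measure is supported on $B$, where $\tilde f=\mathrm{id}$, so it has zero entropy. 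Combining the two cases gives $h_{top}(\tilde f)\le h_{top}(f)$.

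The main obstacle is making this step rigorous. If $U$ is not invariant, orbits may leave $U$ and return, and points of $U\setminus B$ could then carry recurrent dynamics that mixes with $f$-dynamics outside. What I would try first is to choose the neighborhood $U$ adapted to $f$ rather than round. Take a small $n$ with $f^{\pm1}(B)\subset U$, and interpolate along a fundamental domain so that every point of $U\setminus B$ has its $\tilde f$-orbit either converging into $B$ or escaping $U$ for good, agreeing with $f$ afterwards. Any invariant measure then lives on $B\cup K'$, where $K'$ is an $f$-invariant set. Failing this, a fallback is a coding argument: compare $(n,\epsilon)$-separated sets of $\tilde f$ and $f$, noting that orbit segments inside $U$ contribute only subexponentially many distinct itineraries.
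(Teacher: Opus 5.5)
Your construction of $\tilde F$ needs an extra hypothesis, and your entropy argument does not establish item (3).

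\textbf{Orientation.} Your $\tilde F$ (the identity on $\overline{B_\rho(0)}$ and equal to $F$ off $B_{\rho_2}(0)$) cannot exist when $f$ reverses local orientation at $p$, because a homeomorphism of a connected open set has constant local degree. In your annulus step this shows up as follows. The annulus theorem only identifies the region with $S^{n-1}\times[0,1]$. To extend the identity on $S_\rho$ together with the prescribed map on the outer sphere, the induced homeomorphism of $S^{n-1}$ must also be isotopic to the identity (stable homeomorphism theorem), so it must preserve orientation.

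No other construction helps either. Take a small chart ball $D$ about $p$ and $\varepsilon<\operatorname{dist}(p,f(\partial D))$. The straight-line homotopy from $f$ to $\tilde f$ on $\partial D$ misses $p$, so $\deg(\tilde f,D,p)=\deg(f,D,p)=-1$. But $\tilde f^{-1}(p)=\{p\}$ and $\tilde f=\mathrm{id}$ near $p$ force this degree to be $+1$. For instance, for $f(z)=\bar z$ on $S^1$ and $p=1$, no homeomorphism $C^0$-close to $f$ is the identity on an arc. So the statement needs the hypothesis that $f$ preserves orientation at $p$.

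The paper's straight-line interpolation $(1-\rho)\phi+\rho\,\phi\circ f$ fails here too. For the local reflection $z\mapsto(-z_1,z_2,\dots,z_n)$ it identifies $(\pm z_1,z_2,\dots,z_n)$ on any sphere where $\rho(|z|)=\tfrac{1}{2}$. It is not injective even for a rotation by $\pi$. With the isotopy input added, your topological gluing does give items (1) and (2) in the orientation-preserving case, which the paper's formula does not guarantee.

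\textbf{Entropy.} The paper establishes only $h_{\mathrm{top}}(\tilde f)<\infty$, and it gets this from regularity. Its $\tilde f$ is Lipschitz off $D(p,\tfrac{r}{2})$ and the identity on it, and Lipschitz maps have finite entropy. That is all Theorem~\ref{thm: dense homeo} uses, and Lemma~\ref{lema:sinkcreation} again relies on $\tilde f$ being Lipschitz off a small ball. By discarding Lipschitz control you lose this tool, and your replacement does not work.

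Since $\tilde f=f$ off $U$, points of $U\setminus B$ that leave $U$ move by $f$ and may come back, for example when $p$ is a saddle of a horseshoe accumulated by periodic orbits. So $U\setminus B$ need not be wandering. Nothing then rules out ergodic $\tilde f$-measures that charge $U\setminus B$ while being neither supported on $B$ nor $f$-invariant, and for those the comparison with $h_{\mathrm{top}}(f)$ is unavailable. Your fallbacks do not close this:
\begin{itemize}
\item Choosing $U$ adapted to $f$ cannot stop returns that are governed by $f$.
\item Radial monotonicity already fails near $\partial U$, where $\tilde F=F$ can have saddle behaviour.
\item The coding fallback has nothing bounding the complexity of an annulus-theorem homeomorphism inside $U$.
\end{itemize}

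As written, you have not shown even $h_{\mathrm{top}}(\tilde f)<\infty$. What you need is a gluing that keeps $\tilde f$ Lipschitz, so that the Lipschitz entropy bound applies. The inequality $h_{\mathrm{top}}(\tilde f)\le h_{\mathrm{top}}(f)$ is not delivered by the paper's argument either.
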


\begin{proof}
    Let $X$ be a $n$-dimensional manifold and suppose $f:X\to X$ is as in the statement.  We will use local coordinates to construct  $\tilde f$ in a neighborhood of $p$.  For this sake, consider a  coordinate chart $\phi: V \to U\subset \mathbb{R}^n$ with $p \in V$ and $\phi(p) = 0$.  
    Without loss of generality, we may assume $B_r(0) \subset \phi(V)$,  for some sufficiently small $r > 0$. For simplicity,  we denote $D(r,p)=\phi^{-1}(B_r(0))$.  

Since $f$ is continuous and $f(p) = p$, we can assume  $f(D(r,p)) \subset D(2r,p))$. Let $\rho: [0, \infty) \to [0, 1]$ be an increasing smooth function such that:
\begin{equation}
    \rho(t) = \begin{cases}
        0 & \text{if } t \leq \frac{r}{2}, \\
        1 & \text{if } t \geq r.
    \end{cases}
\end{equation}
Now, we define a function $\tilde f=\phi^{-1}\circ g_0:D(r,p)\to D(2r,p)$, where $\tilde f_0:D(r,p)\to B_{2r}(0)$ is given by 
\begin{equation}
    g(x) = (1 - \rho(\Vert \phi(x)\Vert)) \cdot \phi(x) + \rho(\Vert \phi(x)\Vert)) \cdot \phi(f(x)).
\end{equation}
Observe that by construction it holds
\begin{enumerate}
    \item $\tilde f(x)=x$, for every $x\in D(\frac{r}{2},p)$.
    \item $\tilde f(x)=f(x)$, for every $x\in \partial D(r,p)$.
\end{enumerate}
Moreover, the action of $\tilde f$ on the region $D(r,p) \setminus D(\tfrac{r}{2},p)$ can be described as follows. For every radius $t$ satisfying $\tfrac{r}{2} < t < r$ and for every point $x$ on the sphere $\phi^{-1}(S_t(0))$, there exists a curve 
\[
\gamma_x(s) = \phi^{-1}\big((1-s)x + s\phi(f(x))\big), \quad s \in [0,1],
\]
which connects $x$ to $f(x)$. Furthermore, for a fixed $t$, the curves $\gamma_x$ are pairwise disjoint. The homeomorphism $\tilde f$ then sends each point $x$ to the intersection point $\gamma_x(t)$ of the corresponding curve with the sphere of radius $t$. From this construction, we conclude that $\tilde f$ is indeed a homeomorphism. Finally, extend $\tilde f$ to $X$, by setting $\tilde f(x)=f(x)$, for every $x\in X\setminus D(r,p)$. It is clear that $\tilde f\in Homeo(X)$.

We claim that $\tilde f$ satisfies the desired properties. To verify this, first note that, given any $\varepsilon > 0$, the continuity of $f$ ensures the existence of a constant $0 < r < \frac{\varepsilon}{4}$ such that 
\[
f(D(r,p)) \subset B_{\frac{\varepsilon}{4}}(D(r,p)).
\]
Consequently, for every $x \in D(r,p)$, the construction yields
\[
d(f(x), g(x)) \leq d(f(x), x) + d(x, g(x)) \leq \frac{\varepsilon}{2} + \frac{\varepsilon}{2} = \varepsilon.
\]
Since $f \equiv g$ on $X \setminus D(r,p)$, we conclude that 
\[
d_{C^0}(f,g) \leq \varepsilon.
\]

Finally, observe that the topological entropy of $\tilde f$ satisfies
\[
h_{\mathrm{top}}(\tilde f) = \max\left\{ h_{\mathrm{top}}(\tilde f, M\setminus D(\tfrac{r}{2})),\; h_{\mathrm{top}}(\tilde f, \overline{D(\tfrac{r}{2})}) \right\}.
\]
However, because $\tilde f|_{\overline{D(\frac{r}{2})}} = \mathrm{Id}_{\overline{D(\frac{r}{2})}}$, we have 
\[
h_{\mathrm{top}}(\tilde f, \overline{D(\tfrac{r}{2})}) = 0.
\]
On the other hand, since $f$ is Lipschitz and both $\phi$ and $\rho$ are smooth, it follows that $\tilde f$ is Lipschitz on $M \setminus D(\tfrac{r}{2})$. Therefore, its topological entropy on that region is finite, and hence $h_{\mathrm{top}}(\tilde f) < \infty$. This completes the proof.

\end{proof}

Our last ingredient, that is presented in the sequel, allow us to perform an extra perturbation to crate sinks, while preserving finiteness of topological entropy.

\begin{lemma}\label{lema:sinkcreation}
Let $\tilde{f}: X \to X$ be the homeomorphism constructed in Theorem \ref{lemma:explosion}. Then, for every $\varepsilon > 0$, there exists a homeomorphism $g: X \to X$ satisfying:
\begin{enumerate}
    \item $p$  sink for $g$;
    \item $d_{C^0}(g, \tilde{f}) < \varepsilon$;
    \item $h_{\mathrm{top}}(g) \leq h_{\mathrm{top}}(\tilde{f}) < \infty$.
\end{enumerate}
\end{lemma}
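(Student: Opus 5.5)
The idea is to modify $\tilde f$ only inside the ball $B$ of Theorem \ref{lemma:explosion}, where $\tilde f$ is the identity. Inside $B$ we replace the identity by a radial contraction toward $p$. Fix the chart $\phi$ with $\phi(p)=0$ and $\phi(B)=\overline{B_{s}(0)}$ for some $s>0$; by shrinking $s$ we may assume $\operatorname{diam}(B)<\varepsilon/2$. Choose a smooth increasing homeomorphism $\eta:[0,s]\to[0,s]$ with $\eta(t)<t$ for $0<t<s$, $\eta(t)=t$ near $t=s$, and $\eta(t)=t/2$ near $0$. Set
\[
g_0(v)=\eta(\|v\|)\,\frac{v}{\|v\|}\ \ (v\neq 0),\qquad g_0(0)=0,
\]
and define $g=\phi^{-1}\circ g_0\circ\phi$ on $B$ and $g=\tilde f$ on $X\setminus B$. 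Since $g_0$ is the identity near $\partial B_s(0)$, the map $g$ agrees with $\tilde f$ on a neighborhood of $\partial B$. Hence $g$ is a homeomorphism of $X$. Because $g$ and $\tilde f$ both map $B$ into $B$ and differ only there, we get $d_{C^0}(g,\tilde f)\le\operatorname{diam}(B)<\varepsilon$, which is item (2).

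For item (1), pick $0<s'<s$ below the region where $\eta(t)=t$. Then $W=\phi^{-1}(B_{s'}(0))$ satisfies $g(\overline W)\subset W$, since $\eta(t)<t$ there. For every $x\in W$ the radius $\|\phi(g^k(x))\|$ is a decreasing sequence. Its limit is a fixed point of $\eta$ in $[0,s')$, hence $0$. So $g^k(x)\to p$, and $p$ is an attracting fixed point, that is, a sink. Near $p$ the map is even the linear contraction $v\mapsto v/2$ in coordinates, so the sink is Lipschitz there.

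Item (3) is the step that needs care. We decompose $B=\{p\}\cup A\cup F$, where $F=\phi^{-1}(\{s_0\le\|v\|\le s\})$ is the closed annulus on which $g=\mathrm{id}$. The set $A$ is the open region of radii in $(0,s_0)$ with $\eta(t)<t$.

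\begin{enumerate}
\item Every point of $A$ is wandering and converges to $p$, so any $g$-invariant measure gives zero mass to $A$.
\item Every ergodic $g$-invariant measure is therefore supported either in $\{p\}\cup F$ or in $X\setminus B$.
\item On $\{p\}\cup F$ the map is the identity, so these measures have zero entropy.
\item The set $X\setminus\operatorname{int}(B)$ is not invariant in general, but orbits of $g$ that never enter $\operatorname{int}(B)$ coincide with orbits of $\tilde f$. Measures in case (2) that avoid $\operatorname{int}(B)$ are thus $\tilde f$-invariant and have the same entropy.
\item A measure charging $\operatorname{int}(B)$ must live on $\{p\}\cup F$, since $A$ is null by (1), so case (2) is exhaustive.
\end{enumerate}

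The variational principle then gives
\[
h_{\mathrm{top}}(g)=\max\{0,\ \sup h_\mu(\tilde f)\}\le h_{\mathrm{top}}(\tilde f)<\infty,
\]
where the supremum runs over the measures of the last kind. Here finiteness of $h_{\mathrm{top}}(\tilde f)$ comes from Theorem \ref{lemma:explosion}.

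The main obstacle is making the measure bookkeeping in (4) and (5) rigorous. The delicate point is to check that an invariant measure of $g$ not charging $\operatorname{int}(B)$ is genuinely $\tilde f$-invariant with equal entropy. This holds because $g=\tilde f$ on its support, which is a closed invariant set for both maps.
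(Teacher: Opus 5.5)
Your proof is correct, and the construction matches the paper's. Both replace the identity on the ball of fixed points of $\tilde f$ by a radial contraction that is the identity near the boundary sphere. In the paper's chart, the interpolation $\eta h+(1-\eta)\,\phi\circ\tilde f$ reduces to $v\mapsto(1-\delta\eta(\|v\|))v$, since $\tilde f=\mathrm{id}$ there.

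The genuine difference is in item (3). The paper argues that $D(\frac r4,p)$ lies in the basin of $p$ and that $g$ is Lipschitz off it. From this it concludes only $h_{\mathrm{top}}(g)<\infty$. It never compares with $h_{\mathrm{top}}(\tilde f)$, and it leaves informal the step ``Lipschitz on a non-invariant region, hence finite entropy there.'' You instead split $X$ into four $g$-invariant pieces: $\{p\}$, the wandering region $A$, the fixed annulus $F$, and $X\setminus B$ where $g=\tilde f$. Applying the variational principle to ergodic measures then proves the inequality exactly as stated, in fact $h_{\mathrm{top}}(g)=h_{\mathrm{top}}(\tilde f)$. Your argument uses no Lipschitz regularity, only $h_{\mathrm{top}}(\tilde f)<\infty$. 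The paper's version is shorter but delivers less than the statement claims.

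Two small fixes:
\begin{itemize}
\item Your first specification of $\eta$ is self-contradictory: you ask for $\eta(t)<t$ on all of $(0,s)$ and also $\eta(t)=t$ near $s$. You mean $\eta(t)<t$ on $(0,s_0)$ and $\eta=\mathrm{id}$ on $[s_0,s]$, which is how you use it later.
\item Your remark that $X\setminus\operatorname{int}(B)$ is not invariant is wrong. $\tilde f$ is a bijection fixing $B$ pointwise, so $\tilde f(X\setminus B)=X\setminus B$, and hence also $g(X\setminus B)=X\setminus B$. This makes your steps (4)--(5) immediate: an ergodic $g$-measure giving full mass to $X\setminus B$ is $\tilde f$-invariant with the same entropy, with no separate argument about supports needed.
\end{itemize}
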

\begin{proof}
The proof follows the same strategy as that of Theorem~\ref{lemma:explosion}. As before, let $\phi: V \to U$ be a local chart centered at $p$, i.e., $\phi(p)=0$, and assume that $D(r,p) = \phi^{-1}(B_r(0)) \subset V$ consists entirely of fixed points of $\tilde f$.

Choose a smooth decresing function $\eta: [0, \infty) \to [0, 1]$ satisfying
\begin{equation}
    \eta(t) = 
    \begin{cases}
        1, & \text{if } t \leq \frac{r}{4}, \\
        0, & \text{if } t \geq \frac{r}{2}.
    \end{cases}
\end{equation}

Next, define a contraction $h: B_{\frac{r}{4}}(0) \to B_{(1-\delta)\frac{r}{4}}(0)$ by $h(x) = (1-\delta)x$, with $0 < \delta < 1$. We then set $g = \phi^{-1} \circ g_0$, where
\[
g_0(x) = \eta(\| \phi(x) \|) \, h(\phi(x)) + \big(1 - \eta(\| \phi(x) \|)\big) \, \phi(\tilde f(x)).
\]
For points outside $D(r,p)$, we define $g(x) = \tilde f(x)$ for every $x \in X \setminus D(r,p)$.

Clearly, $g \in \mathrm{Homeo}(X)$. Moreover, the contraction $h$ renders $p$ a sink for $g$, since points in $D(\frac{r}{4},p)$ are attracted to $p$.

As in Theorem~\ref{lemma:explosion}, it is straightforward to verify that, for any given $\varepsilon > 0$, we may choose $r, \delta > 0$ sufficiently small so that
\[
d_{C^0}(\tilde f, g) < \varepsilon.
\]
Since $D(\frac{r}{4},p)$ is contained in the basin of attraction of $p$ for $g$, this region contributes nothing to the topological entropy of $g$. On the other hand, by construction, $g$ is Lipschitz on $X \setminus D(\frac{r}{4},p)$; hence its topological entropy there is finite. Consequently,
\[
h_{\mathrm{top}}(g) < \infty,
\]
which completes the proof.

\end{proof}
\begin{remark}\label{rmk:fixedperiodic}
   We observe that, although the previous lemmas are stated for fixed points, they also hold for periodic points. More precisely, if $p$ is a periodic point of period $k$, then Theorem \ref{lemma:explosion} blows up $p$ into an open neighborhood consisting entirely of periodic points of period $k$, while Lemma \ref{lema:sinkcreation} produces a periodic sink of period $k$.
      The proofs in the fixed point case were chosen for expositional simplicity, as they capture all the essential ideas while avoiding the heavier notation that would accompany the periodic setting. Adapting them to periodic points presents no conceptual difficulty and can be done in a routine manner.

\end{remark}

We are finally in a position to provide te reader with a proof for Theorem \ref{thm: dense homeo}.

\begin{proof}[Proof of Theorem \ref{thm: dense homeo}]
Let $X$ be a compact and boundaryless topological manifold of dimension different from $4$. By Lemma \ref{lemma:LP-manifold} we have that $Lip(X)$ is  $C^0$-dense in $Homeo(X)$. Therefore, given any homeomorphism $f_0 \in \mathrm{Homeo}(X)$, there exists a  $f_1\in Lip(X)$ that is arbitrarily $C^0$-close to $f_0$. Moreover, by Lemma \ref{lemma:closing}, we may assume without loss of generality that $f_1$ has a periodic point. Recall that being Lipschitz, we have $h_{top}(f_1)< \infty$. 

Now, applying Theorem \ref{lemma:explosion} and observing Remark \ref{rmk:fixedperiodic} , we obtain a homeomorphism $f_2$ arbitrarily close to $f_1$ in the $C^0$-topology, which retains finite topological entropy and features a closed ball consisting entirely of periodic points with same period. Subsequently, Lemma \ref{lema:sinkcreation} and again Remark \ref{rmk:fixedperiodic} yields a homeomorphism $f$ that is $C^0$-close to $f_2$, also with finite topological entropy, and possesses a strict attracting periodic point. 

Consequently, $f$ is not chain-transitive. By Theorem \ref{thmA}, this implies that $f$ admits a H\"older continuous potential displaying a phase transition. Since all the perturbations performed in this construction can be made arbitrarily small, we conclude that there exists a dense subset of homeomorphisms in $\mathrm{Homeo}(X)$ with finite topological entropy for which there exists a H\"older continuous potential exhibiting a phase transition. This concludes the proof.

\end{proof}

\section*{Acknowledgments.}
The authors are grateful for Professors Alejandro Kocsard and Paulo Varandas whose valuable suggestions  helped us to refine this work. We acknowledge the CT UFRJ hot‑dog stand, where the quiet seed of this work was first sown.
Alexander Arbieto was partially supported by CAPES– Finance Code 001, CNPq Grant 307877/2025-6, PRONEX-Dynamical Systems and FAPERJ “Programa Cientista do Nosso Estado", E-26/201.181/2022 and E-26/200.281/2026. Walter Britto was partially supported by CAPES from Brazil.  
\begin{table}[h]
\begin{tabularx}{\linewidth}{p{1.5cm}  X}
\includegraphics [width=1.8cm]{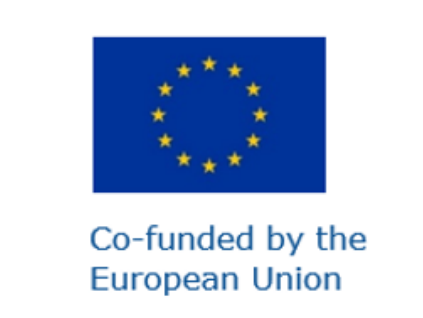} &
\vspace{-1.5cm}
This research is part of a project that has received funding from
the European Union's European Research Council Marie Sklodowska-Curie Project No. 101151716 -- TMSHADS -- HORIZON--MSCA--2023--PF--01.\\
\end{tabularx}
\end{table}

\end{document}